\newtheorem{defn}{Definition}[section]
\newtheorem{theo}[defn]{Theorem}
\newtheorem{lem}[defn]{Lemma}
\newtheorem{prop}[defn]{Proposition}
\newtheorem{rem}[defn]{Remark}
\newtheorem{exam}[defn]{Example}
\newenvironment{proof}{{\bf Proof }}{{\vskip 0.1cm \hfill$\Box$}}
\def\N {{\mathbb N}}
\def\R {{\mathbb R}}
\def\E{{\mathbb E}}
\def\P{{\mathbb P}}
\def\M{{\mathbb M}}
\begin{document}


\noindent
{{\Large\bf Conservativeness and uniqueness of invariant measures related to non-symmetric divergence type operators}
{\footnote{The research of Haesung Lee was supported by Basic Science Research Program through the National Research Foundation of Korea (NRF) funded by the Ministry of Education (2020R1A6A3A01096151). }} \\ \\
\bigskip
\noindent
{\bf Haesung Lee}  \\
\noindent
{\small{\bf Abstract.}  
We present conservativeness criteria for sub-Markovian semigroups generated by divergence type operators with specified infinitesimally invariant measures. The conservativeness criteria in this article are derived by $L^1$-uniqueness and imply that a given infinitesimally invariant measure becomes an invariant measure. We explore further conditions on the coefficients of the partial differential operators that ensure the uniqueness of the invariant measure beyond the case where the corresponding semigroups are recurrent. A main observation is that for conservativeness and uniqueness of invariant measures in this article, no growth conditions are required for the partial derivatives related to the anti-symmetric matrix of functions $C=(c_{ij})_{1 \leq i,j \leq d}$ that determine a part of the drift coefficient. As stochastic counterparts, our results can be applied to show not only the existence of a pathwise unique and strong solution up to infinity to a corresponding It\^{o}-SDE, but also the existence and uniqueness of invariant measures for the family of strong solutions.
\\ \\
\noindent
{Mathematics Subject Classification (2020): Primary 31C25, 60J46, 47D60, Secondary 47D07, 35J15.}\\

\noindent 
{Keywords: conservativeness, $L^1$-uniqueness, invariant measure, recurrence, transience, Dirichlet forms}

\section{Introduction} \label{intro}
Conservativeness of sub-Markovian semigroups or diffusion processes generated by Dirichlet forms or second order partial differential operators have been investigated in various ways (see \cite{kha12, Da85, K86, Ta89, Da92, O92, Osh, FOT, OU17, Stu1, St99, Eb, TaTr, RoShTr, GT15, GT2, GT17, LT18, LT19, LT20} and references therein) and conservativeness of semigroups is known to be closely related to $L^1$-uniqueness (\cite[Corollary 2.2]{St99}), strong Markov uniqueness (\cite[Lemma 1.6(i)]{Eb}), existence of invariant measures (\cite[Lemma 3.3, Remark 3.4]{LT21in}) and non-explosion of the associated Markov processes (\cite[Theorem 3.11, Corollary 3.23]{LT20}). Recently, the existence and uniqueness of (infinitesimally) invariant measures associated to second order partial differential operators with rough coefficients have been investigated in \cite{LT21in} (see Introduction therein). The main purpose of this work is to develop criteria for conservativeness and for the uniqueness of invariant measures of sub-Markovian semigroups whose generator extends $(L, C_0^{\infty}(\R^d))$ where
$$
Lf =\frac{1}{2\rho}\text{\rm div}\left(\rho (A+C) \nabla f\right)+\langle \overline{\mathbf{B}}, \nabla f \rangle, \quad f \in C_0^{\infty}(\R^d).
$$
and where the coefficients of $L$ are supposed to satisfy condition {\bf (S1)} of Section \ref{section2}. We may hence consider non-symmetric divergence type operators. \\
In order to explain our work, let us first mention previous conservativeness results. For a symmetric Dirichlet form $(\mathcal{E}^{0}, D(\mathcal{E}^{0}))$ defined as the closure of
$$
\mathcal{E}^{0}(f,g)=\frac12 \int_{\R^d} \langle A \nabla f, \nabla g \rangle d\mu, \quad f, g \in C_0^{\infty}(\R^d)
$$
in $L^2(\R^d, \mu)$ where $A=(a_{ij})_{1 \leq i,j \leq d}$ is a symmetric matrix of functions and $\mu$ is a locally finite Borel measure on $(\R^d, \mathcal{B}(\R^d))$, there exists a unique sub-Markovian $C_0$-semigroup of contractions $(T^0_t)_{t>0}$ on $L^2(\R^d, \mu)$ associated with $(\mathcal{E}^0, D(\mathcal{E}^0))$ (see \cite[Chapter 1]{FOT} and \cite[Chapter I and II]{MR}).  
As a consequence of \cite[Theorem 2.2]{Ta89}, it is shown in \cite[Example 2]{Ta89} that $(T^0_t)_{t>0}$ is conservative (i.e. $T^0_t 1_{\R^d} :=\lim_{n \rightarrow \infty} T^0_t 1_{B_n} = 1$, $\mu$-a.e. for all $t>0$) 
if $\mu=dx$ (Lebesgue-Borel measure) and $A$ satisfies that for any compact subset $K$ of $\R^d$ there exists a constant $\delta(K)>0$ such that
\begin{equation} \label{eq:2-1}
\langle A(x) \xi, \xi \rangle \geq \delta(K)\|\xi\|^2, \quad \text{$dx$-a.e. on $K$ \; for all $\xi \in \R^d$}
\end{equation}
and that for some constant $M>0$
\begin{equation} \label{eq:1}
\sup_{\xi \in \R^d \setminus \{0\}} \frac{\langle A(x) \xi, \xi \rangle}{\|\xi\|^2} \leq M (2+\|x\|)^2 \ln(2+\|x\|), \quad \forall x \in \R^d
\end{equation}
(see also \cite[Example 5.7.1]{FOT}). 
Here, the conservativeness defined for a sub-Markovian semigroup acting on $L^2$-space is considered in a different manner than the one in Definition \ref{conserdef}(iii), but actually, they are all equivalent.
In particular, \eqref{eq:1} is sharp in the sense that $(T^0_t)_{t>0}$ is not conservative if $\mu=dx$ and $A=(1+\|x\|^2) \ln(1+\|x\|)^{\beta} id$,\, $\beta>1$ (see \cite[Note 6.6]{Da85}). Another explicit criteria for conservativeness of semigroups associated with symmetric Dirichlet forms in a general framework are investigated in \cite[Theorem 4]{Stu1} by using the intrinsic metric induced by a symmetric Dirichlet form. For instance, by \cite[4.1 Theorem]{Sturm98} 
under the assumption that $A=id$, $\mu=\rho dx$ with
$\frac{1}{\rho} \in L^1_{loc}(\R^d)$ (or $\sqrt{\rho} \in H^{1,2}_{loc}(\R^d)$), the intrinsic metric induced by $(\mathcal{E}^0, D(\mathcal{E}^0))$ is the same as the Euclidean metric,
hence it follows from \cite[Theorem 4]{Stu1} that $(T^0_t)_{t>0}$ is conservative if
$$
\int_1^{\infty} \frac{r}{\ln \left(\mu(B_r) \right)} dr =\infty
$$
(cf. \cite[Example 5.7.2]{FOT}). 
In the above two cases, it follows  from the symmetry of $(T^0_t)_{t>0}$ with respect to $\mu$ that $(T^0_t)_{t>0}$ is conservative if and only if $\mu$ is an invariant measure for $(T^0_t)_{t>0}$ (cf. Proposition \ref{prop2.2}(iii)).  Moreover, in the above two cases, no partial derivatives of the components of $A=(a_{ij})_{1 \leq i,j\leq d}$ appear although these occur in the drift coefficient at least in the distributional sense. Beyond the case of symmetric Dirichlet forms, conservativeness criteria for semigroups associated with non-symmetric and sectorial Dirichlet forms (satisfying the strong sector condition as in \cite[Chapter I, (2.4)]{MR}) in a general framework are studied in \cite{TaTr}. There, adapting a probabilistic method from \cite{Ta89}, the main tool is the extended Lyons-Zheng decomposition for the associated non-symmetric Hunt process (cf. \cite{Tr08}). As special model case of \cite{TaTr}, let us mention the case where $d \geq 2$ and the sectorial Dirichlet form $(\mathcal{B}, D(\mathcal{B}))$ is defined as the closure of
$$
\mathcal{B}(f,g)= \frac12 \int_{\R^d} \langle (A+C) \nabla f, \nabla g \rangle dx, \quad f, g \in C_0^{\infty}(\R^d)
$$
in $L^2(\R^d, dx)$, where $A=(a_{ij})_{1 \leq i,j \leq d}$ is a symmetric matrix of locally bounded and measurable functions satisfying \eqref{eq:2-1} and $C=(c_{ij})_{1\leq i, j \leq d}$ is an anti-symmetric matrix of functions satisfying $-c_{ji}=c_{ij} \in H^{1,1}_{loc}(\R^d)$ for all $1 \leq i,j \leq d$ and  for any compact subset $K \subset \R^d$
$$
\max_{1 \leq i,j \leq d}|c_{ij}(x)| \leq \delta(K), \quad \text{$dx$-a.e. on $K$.}
$$
It is derived in \cite[Example 5 (a)]{TaTr} that the sub-Markovian $C_0$-semigroup of contractions $(S_t)_{t>0}$ on $L^2(\R^d, dx)$ associated to $(\mathcal{B}, D(\mathcal{B}))$ is conservative  if there exist $N_0 \in \N$ and a constant $M>0$ such that
\begin{equation} \label{eq:3}
\frac{\langle A(x) x, x \rangle}{\|x\|^2} +|\langle \nabla C(x),x \rangle| \leq M(\|x\|^2+1) \big(\ln(\|x\|^2+1)+1\big), \quad \text{$dx$-a.e.} \; x \in \R^d \setminus \overline{B}_{N_0},
\end{equation}
where for a matrix of functions $B=(b_{ij})_{1 \leq i,j \leq d}$ with $b_{ij} \in H^{1,1}_{loc}(\R^d)$ for all $1 \leq i,j \leq d$ and $\nabla B=( (\nabla B)_1, \ldots, (\nabla B)_d)$ is defined as
\begin{equation*} \label{nablamat}
(\nabla B)_i  := \sum_{j=1}^d \partial_j b_{ij}, \quad 1 \leq i \leq d.
\end{equation*}
In particular, if \eqref{eq:3} holds, then by \cite[Example 5 (a)]{TaTr} the dual semigroup $(S'_t)_{t>0}$ of $(S_t)_{t>0}$ with respect to $dx$ is also conservative, hence $dx$ is an invariant measure for $(S_t)_{t>0}$ (cf. Proposition \ref{prop2.2}(iii)). As in the symmetric case described above, growth conditions for the partial derivatives of $a_{ij}$, $1 \leq i, j \leq d$ and local regularity conditions on $a_{ij}$ $1 \leq i,j \leq d$ are not required in \eqref{eq:3}. Various other conservativeness criteria are developed in \cite{GT17} in the setting of non-sectorial Dirichlet forms. But there similarly to \eqref{eq:3} the partial derivatives related to $C=(c_{ij})_{1 \leq i,j \leq d}$ always occur. \\
Now let us explain our conservativeness criteria in Section \ref{invnewassum}. We assume {\bf (S1)} of Section \ref{section2}. Since $\mu$ as defined in {\bf (S1)} is an infinitesimally invariant measure for $(L, C_0^{\infty}(\R^d))$ (resp. $(L'^{, \mu}, C_0^{\infty}(\R^d))$), i.e.
$$
\int_{\R^d} Lf d\mu =0 \;\; \text{ \Big(resp. $\int_{\R^d} L'^{, \mu}f d\mu =0$\Big)}, \;\quad \forall f \in C_0^{\infty}(\R^d),
$$
it follows by \cite[Theorem 1.5]{St99} that there exists a sub-Markovian $C_0$-semigroup of contractions $(\overline{T}^{\mu}_t)_{t>0}$ (resp. $(\overline{T}'^{,\mu}_t)_{t>0}$) on $L^1(\R^d, \mu)$ whose generators $(\overline{L}^{\mu}, D(\overline{L}^{\mu}))$ (resp. $(\overline{L}'^{, \mu}, D(\overline{L}'^{, \mu}))$) extends $(L, C_0^{\infty}(\R^d))$ (resp. $(L'^{, \mu}, C_0^{\infty}(\R^d))$). By Riesz-Thorin interpolation, $(\overline{T}^{\mu}_t)_{t>0}$ and $(\overline{T}’^{, \mu}_t)_{t>0}$ restricted to $L^1(\R^d, \mu)_b$ can be uniquely extended to sub-Markovian $C_0$-semigroups of contractions on each $L^r(\R^d, \mu)$, $r\in [1,\infty)$, and to sub-Markovian semigroups of contractions on $L^{\infty}(\R^d,\mu)$. We denote these by $(T^{\mu}_t)_{t>0}$ and
$(T'^{, \mu}_t)_{t>0}$ respectively, independently of the $L^{r}(\R^d, \mu)$-space, $r \in [1, \infty]$ on which they are acting (see Proposition \ref{basicprop} and Remark \ref{defnofsemi}). As a consequence of Theorem \ref{theo3:1}(i), $(T^{\mu}_t)_{t>0}$ and $(T'^{, \mu}_t)_{t>0}$ are conservative and $\mu$ is an invariant measure for $(T^{\mu}_t)_{t>0}$ and $(T'^{, \mu}_t)_{t>0}$ if  there exist constants $c_1, c_2>0$, $\alpha \in (0,1)$, $\beta>0$, $M>0$, $\lambda>0$ and $N_0 \in \N$ such that $
\mu(B_r) \leq c_1r^{\beta}+c_2 \text{ for all $r>0$}$,
\begin{equation*} \label{globuni}
\quad \quad \langle A(x) \xi, \xi \rangle \geq \lambda \|\xi\|^2, \quad \text{ for all } \xi \in \R^d \text{ and } x \in \R^d
\end{equation*}
and
\begin{equation} \label{eq:4}
\max_{1 \leq i,j \leq d} |a_{ij}(x)| + \max_{1 \leq i,j \leq d} |c_{ij}(x)|+\|\overline{\mathbf{B}}(x)\|^2 \leq M\|x\|^{2\alpha},\;\;\text{ for a.e. $x\in \R^d\setminus \overline{B}_{N_0}$.}
\end{equation}
To our best knowledge, in contrast to all previous literature, no growth conditions on the partial derivatives of $c_{ij}$, $1 \leq i,j \leq d$, are required in \eqref{eq:4}. On the other hand, local regularity conditions on $a_{ij}, \, c_{ij} \in H^{1,p}_{loc}(\R^d)$ for some $p \in (d, \infty)$, $1 \leq i,j \leq d$, are required in Theorem \ref{theo3:1} since 
the elliptic regularity result for measures (\cite[Corollary 2.10]{BKR2}) is used for its proof. To prove Theorem \ref{theo3:1}(i), we derive $L^1$-uniqueness of $(L, C_0^{\infty}(\R^d))$ by using an iteration technique based on volume and coefficient growth. Moreover, we develop a criterion for $L^r$-uniquenes of $(L, C_0^{\infty}(\R^d))$, $r \in (1,2]$, which also implies recurrence and conservativeness criteria for $(T^{\mu}_t)_{t>0}$ and $(T'^{,\mu}_t)_{t>0}$ in case $\mu$ is finite (Theorem \ref{theo:3.3}). \\
In Section \ref{invasec}, assuming $\overline{\mathbf{B}}=0$ in addition to {\bf (S1)}, we further investigate explicit conditions on the coefficients of $L$ to obtain uniqueness of infinitesimally invariant measures for $(L, C_0^{\infty}(\R^d))$ and existence and uniqueness of invariant measures for $(T^{\mu}_t)_{t>0}$. Indeed it is shown in \cite[Theorems 3.15, 3.17]{LT21in}  that recurrence of $(T^{\mu}_t)_{t>0}$ is a sufficient condition to obtain the existence and uniqueness of (infinitesimally) invariant measures up to multiplicative constants. On the other hand, it is well-known that the transition semigroup of Brownian motion with $d \geq 3$ is transient but has an invariant measure which is the Lebesgue measure (for instance, see \cite[Example 3.23, Remark 3.25]{LT21in}).  Moreover, it is also known that $(T^{\mu}_t)_{t>0}$ is transient and has two invariant measures which are not represented by a constant multiple of each other (\cite[Example 4.1]{LT21in}). Therefore, it is natural to explore conditions on the coefficients of $L$ for which $(T^{\mu}_t)_{t>0}$ is transient but has a unique invariant measure. By using a Liouville-type theorem (\cite[Theorem 9.11 (i)]{F07}) induced by the elliptic Harnack inequality (\cite[Theorem 5]{Se64}), we show that condition {\bf (S2)} (in the first paragraph of Section \ref{invasec}) implies uniqueness of infinitesimally invariant measures for $(L, C_0^{\infty}(\R^d))$. This shows in particular that the Lebesgue measure is the unique infinitesimally invariant measure for the Laplace operator. Combining the conservativeness criteria developed in Section \ref{invnewassum} with the results of Section \ref{invasec}, we present examples of classes of $L$ satisfying {\bf (S2)} so that $\mu$ is the unique invariant measure for $(T^{\mu}_t)_{t>0}$ which is transient and conservative (see Theorem \ref{rectrancriter}(i), (ii), Proposition \ref{cor:3.8} and Remark \ref{rem4.6}(ii)). We also present in Theorem \ref{rectrancriter}(iii) an example of a class of $L$ for which $(T^{\mu}_t)_{t>0}$ has no invariant measures. \\
The results in this article can be applied to stochastic counterparts. Under assumption {\bf (S1)} in Section \ref{section2}, the conservativeness criteria of Section \ref{invnewassum} can be used as criteria for existence of a pathwise unique and strong solution up to infinity to the following time-homogeneous It\^{o}-SDE on $\R^d$
\begin{equation} \label{itosde}
X_t =x+\int_0^t \sigma(X_s)dW_s + \int_0^t \left(\beta^{\rho, A+C^T}+\overline{\mathbf{B}} \right)(X_s)ds,  \quad 0 \leq t < \infty,
\end{equation}
where $x \in \R^d$, $d \geq 2$, $\sigma=(\sigma_{ij})_{1 \leq i,j \leq d}$ is a matrix of continuous functions with $A=(a_{ij})_{1 \leq i,j \leq d}=\sigma \sigma^T$ and $\beta^{\rho, A+C^T}=\frac12 \nabla (A+C^T)+\frac{1}{2\rho}(A+C^T) \nabla \rho$.
Indeed, by \cite[Theorem 2.31]{LT20} (see also \cite[Proposition 3.10]{LT18}  and \cite[Theorem 6]{LT19de}), there exists a regularized semigroup $(P^{\mu}_t)_{t>0}$ such that for each $ f \in \cup_{r \in [1, \infty]} L^r(\R^d, \mu)$ it holds
$$
P^{\mu}_{\cdot} f \in C(\R^d \times (0, \infty)) \;\text{ and }\; P^{\mu}_t f = T^{\mu}_t f, \;\text{ $\mu$-a.e.} \;\;  \forall t>0.
$$
Then, by \cite[Theorem 3.11]{LT20}
there exists a diffusion process
$$
\M =  (\Omega, \mathcal{F}, (\mathcal{F}_t)_{t \ge 0}, (X_t)_{t \ge 0}, (\mathbb{P}_x)_{x \in \R^d\cup \{\Delta\}}   )
$$
with state space $\R^d$ and lifetime 
$$
\zeta=\inf\{t\ge 0\,:\,X_t=\Delta\}=\inf\{t\ge 0\,:\,X_t\notin \R^d\}, 
$$
such that for any $f \in \mathcal{B}_b(\R^d)$, $x \in \R^d$ and $t>0$
$$
\mathbb{E}_x[f(X_t)]= P^{\mu}_t f(x),
$$
where $\Delta$ is a point at infinity and $\E_x$ is an expectation with respect to $\P_x$. Moreover, by \cite[Corollary 3.23]{LT20}, $(T^{\mu}_t)_{t>0}$ is conservative if and only if $\M$ is non-explosive, i.e.
$$
\P_x(\zeta=\infty)=1 \quad \text{ for all $x \in \R^d$}.
$$
Therefore, it follows from \cite[Corollary 3.23, Theorem 3.52]{LT20} (see also \cite[Theorem 5.1]{LT18})
 that if $(T^{\mu}_t)_{t>0}$ is conservative, then on a probability space $(\widetilde{\Omega}, \widetilde{\mathcal{F}},  \widetilde{\P})$ carrying a $d$-dimensional standard Brownian motion $(\widetilde{W}_t)_{t \geq 0}$ there exists a pathwise unique and strong solution $(Y^x_t)_{t\geq 0}$ to \eqref{itosde} such that
\begin{equation} \label{unisemi}
P^{\mu}_t f(x)  = \mathbb{E}_x[f(X_t)]=\widetilde{\E}[f(Y^x_t)], \quad \forall x \in \R^d, t>0,  f \in \mathcal{B}_b(\R^d),
\end{equation}
where $\widetilde{\E}$ is the expectation with respect to $\widetilde{\P}$. In particular, for any locally finite Borel measure $\nu$ on $(\R^d, \mathcal{B}(\R^d))$ with  $\nu  \ll dx$, it follows from \eqref{unisemi} that  $\nu$ is an invariant measure for $(T^{\mu}_t)_{t>0}$ if and only if $\nu$ is an invariant measure for the family of strong solutions $(Y^x_t)_{t\geq0}$, $x \in \R^d$, i.e.
\begin{equation} \label{invdefn}
\int_{\R^d} \widetilde{\P}(Y_t^x \in A) \nu(dx) =\nu(A), \quad \forall A \in \mathcal{B}(\R^d),\, t>0.
\end{equation}
Thus, the criteria for existence and uniqueness of invariant measures for $(T^{\mu}_t)_{t>0}$ in this article are equivalently criteria for existence and uniqueness of invariant measures for the family of strong solutions $(Y^x_t)_{t\geq0}$, $x \in \R^d$ among the class of invariant measures which are locally finite and absolutely continuous with respect to $dx$.  
In Example \ref{ex:3.10}, we present an explicit example of a Brownian motion with singular drift that has a unique invariant measure.
We end the introduction by presenting the main results regarding stochastic  counterparts which are direct consequences of 
Theorem \ref{theo3:1}(i), Theorem \ref{theo:3.3}, Proposition \ref{cor:3.8} and \cite[Corollary 3.23, Theorem 3.52]{LT20}.

\begin{theo} \label{mainsto}
Assume that {\bf (S1)} in Section \ref{section2} with $\overline{\mathbf{B}}=0$ holds. Then the following (i)-(iii) hold.
\begin{itemize}
\item[(i)]
If there exist constants $c_1, c_2>0$, $\alpha \in [0,1)$, $\beta \in [0, \infty)$, $C_1>0$ and $N_0 \in \N$ such that 
\begin{equation*}
\mu(B_r) \leq c_1r^{\beta}+c_2 \;\; \text{ for all $r>0$}
\end{equation*}
and for a.e. $x\in \R^d\setminus \overline{B}_{N_0}$,
$$
\max_{1 \leq i,j \leq d} |a_{ij}(x)|+ \left( \max_{1 \leq i,j \leq d} |c_{ij}(x)| \right) \left(\inf_{\xi \in \R^d \setminus \{0\}} \frac{\langle A(x) \xi, \xi \rangle}{\|\xi\|^2} \right)^{-1} \leq C_1 \|x\|^{2\alpha},
$$
then for each $y \in \mathbb{R}^d$ and probability space $(\widetilde{\Omega}, \widetilde{\mathcal{F}},  \widetilde{\P})$ carrying a $d$-dimensional standard Brownian motion $(\widetilde{W}_t)_{t \geq 0}$
there exists a pathwise unique and strong solution  $(Y^y_t)_{t\geq 0}$ to
\begin{equation} \label{mainresde} 
Y^y_t =y+\int_0^t \sigma(Y^y_s)d\widetilde{W}_s + \int_0^t \beta^{\rho, A+C^T}  (Y^y_s)ds,  \quad 0 \leq t < \infty,
\end{equation}
where $x \in \R^d$, $d \geq 2$, $\sigma=(\sigma_{ij})_{1 \leq i,j \leq d}$ is a matrix of continuous functions with $A=(a_{ij})_{1 \leq i,j \leq d}=\sigma \sigma^T$ and $\beta^{\rho, A+C^T}=\frac12 \nabla (A+C^T)+\frac{1}{2\rho}(A+C^T) \nabla \rho$. Moreover, $\mu$ is an invariant measure for the family of strong solutions $(Y^y_t)_{t\geq0}$, $y \in \R^d$, i.e.
$$
\int_{\R^d} \widetilde{\P}(Y_t^y \in A) \mu(dy) =\mu(A), \quad \forall A \in \mathcal{B}(\R^d),\, t>0.
$$
\item[(ii)]
If $\mu$ is finite and there exists a constant $K>0$ such that for a.e. $x \in \R^d \setminus \overline{B}_{N_0}$
$$
\frac{\langle A(x)x, x \rangle}{\|x\|^2} + \left( \max_{1 \leq i,j \leq d} |c_{ij}(x)| \right) \left(\inf_{\xi \in \R^d \setminus \{0\}} \frac{\langle A(x) \xi, \xi \rangle}{\|\xi\|^2} \right)^{-1}  \leq K \left( \|x\| \cdot  \ln \|x\|  \right)^2, 
$$
then the same conclusion as in (i) holds.

\item[(iii)]
If  there exist constants $c_1, c_2>0$, $\alpha \in [0,1)$ and $\delta \geq 0$ such that
$$
\frac{c_1}{(1+\|x\|)^{2\alpha}}  \leq \rho(x) \leq c_2(1+\|x\|^{\delta}), \quad \forall x \in \R^d,
$$
$A=(a_{ij})_{1 \leq i,j \leq d}=\frac{1}{\rho} \widetilde{A}$ and $C=(c_{ij})_{1 \leq i,j \leq d}=\frac{1}{\rho} \widetilde{C}$, where
$\widetilde{A}=(\widetilde{a}_{ij})_{1 \leq i,j \leq d}$ is a symmetric matrix of functions with $\widetilde{a}_{ij} \in H^{1,p}_{loc}(\R^d) \cap C(\R^d)$ for all $1 \leq i,j \leq d$, for some $p \in (d, \infty)$, such that for some constants $\lambda, \Lambda>0$
$$
\lambda \|\xi\|^2 \leq \langle \widetilde{A}(x) \xi, \xi \rangle \leq \Lambda \|\xi\|^2, \quad \forall x, \xi \in \R^d,
$$
and $\widetilde{C}=(\widetilde{c}_{ij})_{1 \leq i,j \leq d}$ is an anti-symmetric matrix of functions with $\widetilde{c}_{ij} \in H^{1,p}_{loc}(\R^d) \cap C(\R^d)$ such that
$$
\max_{1 \leq i,j \leq d} |\widetilde{c}_{ij}(x)| \leq \Lambda, \quad \forall  x\in \R^d,
$$
then the same conclusion as in (i) holds and $\mu$ is a unique invariant measure for the family of strong solutions $(Y^y_t)_{t\geq0}$ to \eqref{mainresde} among the class of invariant measures which are locally finite and absolutely continuous with respect to $dx$, i.e. if $\nu$ is locally finite Borel measure on $(\R^d, \mathcal{B}(\R^d))$ with $\nu  \ll dx$ satisfying \eqref{invdefn}, then there exists a constant $c>0$ such that $\nu=c\mu$.

\end{itemize}
\end{theo}

\section{Preliminaries} \label{section2}
For basic notations and conventions which are not defined in this article, we refer to \cite[Notations and Conventions]{LT20}. 
Here we briefly introduce the notations and conventions mainly used in this paper. Let $\langle \cdot, \cdot \rangle$ and $\| \cdot \|$ denote the Euclidean inner product and the Euclidean norm in $\mathbb{R}^d$, respectively. For $x \in \R^d$ and $r>0$, let $B_r(x)=\{y \in \mathbb{R}^d: \|x-y\|<r  \}$ and its closure is denoted by $\overline{B}_r (x)$ and we write $B_r=B_r(0)$. Denote by $\mathcal{B}(\R^d)$ the Borel subsets of $\R^d$ or the set of Borel measurable functions $f: \mathbb{R}^d \rightarrow \mathbb{R}$. The set of bounded Borel measurable functions $f: \mathbb{R}^d \rightarrow \mathbb{R}$ is denoted by $\mathcal{B}_b(\R^d)$. If $\mathcal{A}$ is a subset of $\mathcal{B}(\R^d)$, we define $\mathcal{A}_0=\{f \in \mathcal{A}: \text{supp}(f):=\text{supp}(|f| dx)  \text{ is compact in } \R^d   \}$, $\mathcal{A}_b= \mathcal{A} \cap \mathcal{B}_b(\R^d)$ and $\mathcal{A}_{0,b}=\mathcal{A}_0 \cap \mathcal{A}_b$. For $s \in [1, \infty]$ and an open subset $U$ of $\R^d$ with a measure $\mu$ on $U$, denote by $L^s(\R^d, \mu)$ the usual $L^s$-space on $U$ with respect to $\mu$ equipped with the norm $\|\cdot \|_{L^s(U, \mu)}$ and we write $L^s(U)=L^s(U, dx)$, where $dx$ is the Lebesgue-Borel measure. For $s \in [1, \infty]$ and a measure $\mu$ on $\R^d$, let $L_{loc}^s(\R^d, \mu)$ be defined as
$$
L^s_{loc}(\R^d, \mu)=\{f \in \mathcal{B}(\R^d):  f|_{B} \in L^s(B, \mu) \text{ for any open ball $B$ in $\R^d$} \}
$$
and let $L^s_{loc}(\R^d)=L^s_{loc}(\R^d, dx)$. The set of continuous functions on $\R^d$ and the set of compactly supported and infinitely differentiable functions are denoted by $C(\R^d)$ and $C_0^{\infty}(\R^d)$, respectively.
Let $\nabla f=(\partial_1 f, \ldots, \partial_d f)$ where $\partial_j f$ is the $j$-th weak partial derivative of $f$ on $\R^d$ and let 
$\partial_{ij} f=\partial_i \partial_j f$ and $\Delta f = \sum_{i=1}^d \partial_{ii} f$. For a vector field $\mathbf{F}=(f_1, \ldots, f_d)$, let
$\text{div} \mathbf{F}= \sum_{i=1}^d \partial_i f_i$.
For an open subset $U$ of $\R^d$ and $s \in [1, \infty]$, $H^{1,s}(U)$ is  defined as 
$$
H^{1,s}(U)=\{ f \in L^s(U): \partial_i f \in L^s(U)  \,\text{ for all $i=1,\ldots, d$} \}
$$  
equipped with the norm $\|f\|_{H^{1,s}(U)}=(\|f\|^s_{L^s(U)}+\sum_{i=1}^d\|\partial_i f \|_{L^s(U)}^s)^{1/s}$,
if $s \in [1, \infty)$ and $\|f\|_{H^{1,\infty}(U)}=\|f\|_{L^{\infty}(U)}+\sum_{i=1}^d \|\partial_i f\|_{L^{\infty}(U)}$, if $s=\infty$.
For $s \in [1, \infty]$, $H^{1,s}_{loc}(\R^d)$ is defined as
$$
H^{1,s}_{loc}(\R^d)=\{f \in L^s_{loc}(\R^d): f|_B \in H^{1,s}(B) \text{ for any open ball $B$ in $\R^d$} \}.
$$
\\ \\
{\bf Throughout this article, we assume the following condition} \\ \\
{\bf (S1)}: {\it $d \geq 2$, for some $p \in (d, \infty)$, $\rho \in H^{1,p}_{loc}(\R^d) \cap C(\R^d)$ with $\rho(x)>0$ for all $x \in \R^d$  and $\mu=\rho dx$. $A=(a_{ij})_{1 \leq i,j \leq d}$ is a symmetric matrix of functions satisfying \eqref{eq:2-1} and $a_{ji}=a_{ij} \in H^{1,p}_{loc}(\R^d) \cap C(\R^d)$ for all $1 \leq i,j \leq d$. $C=(c_{ij})_{1 \leq i,j \leq d}$ is an anti-symmetric matrix of functions satisfying $-c_{ji}=c_{ij} \in H^{1,p}_{loc}(\R^d) \cap C(\R^d)$ for all $1 \leq i,j \leq d$. $\overline{\mathbf{B}} \in L^p_{loc}(\R^d, \R^d)$ satisfies $\int_{\R^d} \langle \overline{\mathbf{B}}, \nabla f \rangle d\mu=0$ for all  $f \in C_0^{\infty}(\R^d)$. 
}
\\ \\
The partial differential operators $(L, C_0^{\infty}(\R^d))$ and $(L'^{, \mu}, C_0^{\infty}(\R^d))$
are defined as
\begin{eqnarray*}
Lf &:=& \frac{1}{2\rho}\text{\rm div}\left(\rho (A+C) \nabla f\right)+\langle \overline{\mathbf{B}}, \nabla f \rangle=
\frac12 \text{\rm trace}(A \nabla^2 f) +\langle \beta^{\rho, A+C^T}+\overline{\mathbf{B}}, \nabla f  \rangle, \quad f \in C_0^{\infty}(\R^d), \\
L'^{, \mu}f &:=& \frac{1}{2\rho}\text{\rm div}\left(\rho (A+C^T) \nabla f\right)-\langle \overline{\mathbf{B}}, \nabla f \rangle=\frac12 \text{\rm trace}(A \nabla^2 f) +\langle \beta^{\rho, A+C}-\overline{\mathbf{B}}, \nabla f  \rangle, \quad f \in C_0^{\infty}(\R^d),
\end{eqnarray*}
where $\nabla^2 f(x)$ denotes the Hessian matrix of $f$ at $x$ and given a matrix of functions $B=(b_{ij})_{1 \leq i,j \leq d}$ with $b_{ij} \in H^{1,1}_{loc}(\R^d)$ for all $1 \leq i,j \leq d$, 
\begin{equation*} \label{logderi}
\beta^{\rho, B} := \frac12 \nabla B + \frac{1}{2\rho} B \nabla \rho
\end{equation*}
and
$\nabla B=( (\nabla B)_1, \ldots, (\nabla B)_d)$ is defined as
\begin{equation*}
(\nabla B)_i  := \sum_{j=1}^d \partial_j b_{ij}, \quad 1 \leq i \leq d.
\end{equation*}

\begin{prop} \label{basicprop}
Assume that {\bf (S1)} holds. Then the following (i)--(vii) hold.
\begin{itemize}
\item[(i)]
There exists a sub-Markovian $C_0$-semigroup of contractions $(\overline{T}^{\mu}_t)_{t>0}$ on $L^1(\R^d, \mu)$ whose generator
$(\overline{L}^{\mu}, D(\overline{L}^{\mu}))$ extends $(L, C_0^{\infty}(\R^d))$.

\item[(ii)]
There exists a sub-Markovian $C_0$-semigroup of contractions $(\overline{T}'^{,\mu}_t)_{t>0}$ on $L^1(\R^d, \mu)$ whose generator
$(\overline{L}'^{, \mu}, D(\overline{L}'^{, \mu}))$ extends $(L'^{, \mu}, C_0^{\infty}(\R^d))$.

\item[(iii)]
For any $f, g \in L^1(\R^d, \mu)_b$, it holds
\begin{equation} \label{dualproper}
\int_{\R^d} T^{\mu}_t f \cdot g d\mu=\int_{\R^d} f \cdot \overline{T}'^{, \mu}_t g d\mu.
\end{equation}
\item[(iv)]
Let $r \in [1, \infty)$. Then for any $t>0$ and $f \in L^1(\R^d, \mu)_b$, it holds
\begin{equation} \label{basicriesz}
\|\overline{T}^{\mu}_t f\|_{L^r(\R^d, \mu)} \leq \|f\|_{L^r(\R^d, \mu)}
\end{equation}
and the unique continuous extensions $(T^{\mu}_t)_{t>0}$ of $(\overline{T}^{\mu}_t)_{t>0}|_{L^1(\R^d, \mu)_b}$ to $L^r(\R^d, \mu)$ form sub-Markovian $C_0$-semigroup of contractions on $L^r(\R^d, \mu)$. Moreover, let $(L^{\mu}_r, D(L^{\mu}_r))$ be the generator 
in $L^r(\R^d, \mu)$ associated with $(T^{\mu}_t)_{t>0}$. If $f \in D(\overline{L}^{\mu}) \cap L^r(\R^d, \mu)$ and $\overline{L}^{\mu} f \in L^r(\R^d, \mu)$, then $f \in D(L^{\mu}_r)$ and $\overline{L}^{\mu} f = L^{\mu}_r f$.

\item[(v)]
Let $r \in [1, \infty)$. Then for any $t>0$ and $f \in L^1(\R^d, \mu)_b$, it holds
\begin{equation}
\|\overline{T}'^{, \mu}_t f\|_{L^r(\R^d, \mu)} \leq \|f\|_{L^r(\R^d, \mu)}
\end{equation}
and the unique continuous extensions $(T'^{, \mu}_t)_{t>0}$ of $(\overline{T}'^{, \mu}_t)_{t>0}|_{L^1(\R^d, \mu)_b}$ to $L^r(\R^d, \mu)$ form sub-Markovian $C_0$-semigroup of contractions on $L^r(\R^d, \mu)$. Moreover, let $(L'^{, \mu}_r, D(L'^{, \mu}_r))$ be the generator 
in $L^r(\R^d, \mu)$ associated with $(T'^{, \mu}_t)_{t>0}$. If $f \in D(\overline{L}'^{, \mu}) \cap L^r(\R^d, \mu)$ and $\overline{L}'^{, \mu} f \in L^r(\R^d, \mu)$, then $f \in D(L'^{, \mu}_r)$ and $\overline{L}'^{, \mu} f = L'^{, \mu}_r f$.

\item[(vi)]
For $t>0$ and $f \in L^{\infty}(\R^d, \mu)$ with $f \geq 0$, define
\begin{equation} \label{deflinf}
T^{\mu}_t f:= \lim_{n \rightarrow \infty} T^{\mu}_t f_n,
\end{equation}
where $(f_n)_{n \geq 1}$ is an increasing sequence of non-negative functions in $L^1(\R^d, \mu)_b$ which converges to $f$ \,$\mu$-a.e. Then \eqref{deflinf} is well-defined regardless of the choice of \ $(f_n)_{n \geq 1}$. For $t>0$ and $f \in L^{\infty}(\R^d, \mu)$, define
$$
T^{\mu}_t f := T^{\mu}_t f^+ -T^{\mu}_t f^-.
$$
Then, $(T^{\mu}_t)_{t>0}$ is a sub-Markovian semigroup on $L^{\infty}(\R^d, \mu)$ and
$$
T^{\mu}_t f=\overline{T}^{\mu}_t f, \quad \text{ for all $f \in L^{1}(\R^d, \mu)_b$}.
$$

\item[(vii)]
For $t>0$ and $f \in L^{\infty}(\R^d, \mu)$ with $f \geq 0$, define
\begin{equation} \label{deflinf2}
T'^{, \mu}_t f:= \lim_{n \rightarrow \infty} T'^{, \mu}_t f_n,
\end{equation}
where $(f_n)_{n \geq 1}$ is an increasing sequence of non-negative functions in $L^1(\R^d, \mu)_b$ which converges to $f$ \,$\mu$-a.e. Then \eqref{deflinf2} is well-defined regardless of the choice of \ $(f_n)_{n \geq 1}$. For $t>0$ and $f \in L^{\infty}(\R^d, \mu)$, define
$$
T'^{, \mu}_t f := T'^{, \mu}_t f^+ -T'^{, \mu}_t f^-.
$$
Then, $(T'^{, \mu}_t)_{t>0}$ is a sub-Markovian semigroup on $L^{\infty}(\R^d, \mu)$ and
$$
T'^{, \mu}_t f=\overline{T}'^{, \mu}_t f, \quad \text{ for all $f \in L^{1}(\R^d, \mu)_b$}.
$$
\end{itemize}
\end{prop}
\begin{proof}
(i) Let $f \in C_0^{\infty}(\R^d)$ be given. Then, it follows from the integration by parts that
$$
\int_{\R^d} Lf d\mu = 0, \quad \text{ for all $f \in C_0^{\infty}(\R^d)$}.
$$
Thus, the assertion follows by \cite[Theorem 1.5]{St99}.\\
(ii) Similarly to (i), the assertion follows. \\
(iii) The assertion follows from \cite[Remark 1.7]{St99}.\\
(iv) By Riesz-Thorin interpolation, \eqref{basicriesz} holds. The rest follows by \cite[Lemma 1.11]{Eb}. \\
(v) Similarly to (iii), the assertion follows. \\
(vi) Let $t>0$ and  $f \in L^{\infty}(\R^d, \mu)$ with $f \geq 0$. Choose an increasing sequence of non-negative functions $(f_n)_{n \geq 1}$ in $L^1(\R^d, \mu)_b$ which converges to $f$ \,$\mu$-a.e. Then by the monotone convergence,
$$
\lim_{n \rightarrow \infty } T^{\mu}_t f_n \; \text{ exists $\mu$-a.e.} \text{ and }\lim_{n \rightarrow \infty } T^{\mu}_t f_n \leq \|f\|_{L^{\infty}(\R^d, \mu)} \text{ $\mu$-a.e.} 
$$
Let  $(g_n)_{n \geq 1}$ be an increasing sequence of non-negative functions in $L^1(\R^d, \mu)_b$ which converges to $f$ \,$\mu$-a.e. 
Then
$$
\lim_{n \rightarrow \infty } T^{\mu}_t g_n \; \text{ exists $\mu$-a.e.}  \text{ and }\lim_{n \rightarrow \infty } T^{\mu}_t g_n \leq \|f\|_{L^{\infty}(\R^d, \mu)} \text{ $\mu$-a.e.} 
$$
Let $\varphi \in C_0^{\infty}(\R^d)$. Then, by Lebesgue's theorem and (iii)
\begin{align*}
&\int_{\R^d} \left(\lim_{n \rightarrow \infty} T^{\mu}_t f_n\right) \cdot \varphi \,d\mu=
\lim_{n \rightarrow \infty} \int_{\R^d} T^{\mu}_t f_n \cdot \varphi \,d\mu =\lim_{n \rightarrow \infty} \int_{\R^d} \overline{T}^{\mu}_t f_n \cdot \varphi \,d\mu \\
&\quad =\lim_{n \rightarrow \infty} \int_{\R^d} f_n \cdot \overline{T}'^{, \mu}_t  \varphi \,d\mu= \int_{\R^d} f  \cdot \overline{T}'^{, \mu}_t  \varphi \,d\mu.
\end{align*}
Likewise, 
$$
\int_{\R^d} \left(\lim_{n \rightarrow \infty} T^{\mu}_t g_n\right) \cdot \varphi \,d\mu= \int_{\R^d} f  \cdot \overline{T}'^{, \mu}_t  \varphi \,d\mu,
$$
hence
$$
\lim_{n \rightarrow \infty } T^{\mu}_t f_n=\lim_{n \rightarrow \infty } T^{\mu}_t g_n \;\text{ $\mu$-a.e. }
$$
Thus, \eqref{deflinf} is well-defined. The semigroup property of $(T^{\mu}_t)_{t>0}$ holds since for any $t, s>0$
$$
T_t T_s f = T_t(\lim_{n \rightarrow \infty} T_s f_n) = \lim_{n \rightarrow \infty}T_t T_s f_n =\lim_{n \rightarrow \infty} T_{t+s}f_n = T_{t+s} f, \;\;\text{$\mu$-a.e.}
$$
(vii) Similarly to (vi), the assertion follows.
\end{proof}

\begin{rem} \label{defnofsemi} 
From now on, for each $r \in [1, \infty)$ we denote by $(T^{\mu}_t)_{t>0}$ (resp. $(T’^{, \mu}_t)_{t>0}$) the sub-Markovian $C_0$-semigroup of contractions 
on $L^r(\R^d, \mu)$ as in Proposition \ref{basicprop}(iii), (resp. Proposition \ref{basicprop}(iv)) and denote again by $(T^{\mu}_t)_{t>0}$ (resp. $(T’^{, \mu}_t)_{t>0}$) the sub-Markovian semigroup on $L^{\infty}(\R^d, \mu)$  as in Proposition \ref{basicprop}(v), (resp. Proposition \ref{basicprop}(vi)). Then for each $t>0$ and $f \in L^{r_1}(\R^d, \mu) \cap L^{r_2}(\R^d, \mu)$ for some $r_1, r_2 \in [1, \infty]$, the consistency of $T^{\mu}_t f$  (resp. $T’^{, \mu}_t f$) follows by Proposition \ref{basicprop}(iii), (v) (resp. Proposition \ref{basicprop}(iv), (vi)).
Different from $(\overline{T}^{\mu}_t)_{t>0}$ (resp. $(\overline{T}’^{, \mu}_t)_{t>0}$) only acting on $L^1(\R^d, \mu)$, $(T^{\mu}_t)_{t>0}$ (resp. $(T’^{, \mu}_t)_{t>0}$) acts on $\cup_{s \in [1, \infty]} L^s(\R^d, \mu)$. In particular, if $(T^{\mu}_t)_{t>0}$ (resp. $(T’^{, \mu}_t)_{t>0}$) is considered as a sub-Markovian semigroup on $L^{s}(\R^d, \mu)$ for some $s \in [1, \infty]$, then
$$
\overline{T}^{\mu}_t f= T^{\mu}_t f \quad \text{(resp. $\overline{T}'^{, \mu}f =T'^{, \mu}_t f$)}, \quad \text{ for any $f \in L^1(\R^d, \mu) \cap L^s(\R^d, \mu)$ and $t>0$}.
$$
Thus, unless otherwise stated, we will exclusively use $(T^{\mu}_t)_{t>0}$ (resp. $(T'^{, \mu}_t)_{t>0}$)
instead of $(\overline{T}^{\mu}_t)_{t>0}$ (resp. $(\overline{T}'^{, \mu}_t)_{t>0}$).
\end{rem}
In this section for the convenience of the readers, we review  the definitions of infinitesimally invariant measures, invariant measures, conservativeness, $L^r$-uniqueness, recurrence and transience and some basic results about their relations as presented in \cite{LT21in}. 
\begin{defn} \label{conserdef}
\begin{itemize}
\item[(i)]
A positive, locally finite measure $\widehat{\mu}$ defined on $(\R^d, \mathcal{B}(\R^d))$ satisfying $Lf \in L^1(\R^d, \widehat{\mu})$ for all $f \in C_0^{\infty}(\R^d)$ is called an infinitesimally invariant measure for $(L,C_0^{\infty}(\R^d))$, if
$$
\int_{\R^d} Lf d\widehat{\mu} = 0, \qquad \forall f \in C_0^{\infty}(\R^d).
$$
\item[(ii)] A positive, locally finite measure $\widetilde{\mu}$ defined on $(\R^d, \mathcal{B}(\R^d))$ with $\widetilde{\mu} \ll  \mu$, and $Lf \in L^1(\R^d, \widetilde{\mu})$ for all $f \in C_0^{\infty}(\R^d)$, is called an invariant measure for $(T^{\mu}_t)_{t>0}$ (acting on $L^{\infty}(\R^d, \mu)$), if
$$
\int_{\R^d} T^{\mu}_t 1_A d \widetilde{\mu}  = \widetilde{\mu}(A), \qquad \forall A \in \mathcal{B}(\R^d),\; t>0.
$$
\item[(iii)]
$(T^{\mu}_t)_{t>0}$ (acting on $L^{\infty}(\R^d, \mu)$) is called conservative, if
$$
T^{\mu}_t 1_{\mathbb{R}^d} = 1 \; \text{ $\mu$-a.e.\;\; for one (and hence all) $t>0$. }
$$
$(T^{\mu}_t)_{t>0}$ is called non-conservative, if $(T^{\mu}_t)_{t>0}$ is not conservative.
\end{itemize}
\item[(iv)] We say that $\mu$ is the unique infinitesimally invariant measure (resp. invariant measure) for $(L, C_0^{\infty}(\R^d))$ (resp. $(T^{\mu}_t)_{t>0}$) if whenever $\widetilde{\mu}$ is an infinitesimally invariant measure (resp. invariant measure) for $(L, C_0^{\infty}(\R^d))$ (resp. $(T^{\mu}_t)_{t>0}$), then there exists a constant $c>0$ such that $\widetilde{\mu}=c \mu$.
\item[(v)] $(L, C_0^{\infty}(\R^d))$ is said to be $L^r(\R^d, \mu)$-unique, $r \in [1, p]$, if $(T^{\mu}_t)_{t>0}$ is the only $C_0$-semigroup on $L^r(\R^d, \mu)$, whose generator extends $(L, C_0^{\infty}(\R^d))$.
\item[(vi)] Let $(G^{\mu}_{\alpha})_{\alpha>0}$ and $(G'^{, \mu}_{\alpha})_{\alpha>0}$ be the resolvents associated to $(T^{\mu}_t)_{t>0}$ and $(T'^{, \mu}_t)_{t>0}$
on $L^r(\R^d, \mu)$, $r \in [1, \infty]$, respectively, i.e.
$$
G^{\mu}_{\alpha} f := \int_0^{\infty} e^{-\alpha t} T^{\mu}_t f dt, \;\;\;\; G'^{,\mu}_{\alpha} f := \int_0^{\infty} e^{-\alpha t} T'^{, \mu}_t f dt, \; \quad f \in L^r(\R^d, \mu), \,\alpha>0.
$$
For each $f \in L^1(\R^d, \mu)$, $f  \geq 0$ $\mu$-a.e, let 
$$
G^{\mu}f:= \lim_{N \rightarrow \infty} \int_0^N T^{\mu}_t f \, d\mu = \lim_{\alpha \rightarrow 0+ } \int_{0}^{\infty} e^{-\alpha t} T^{\mu}_t f \, d\mu =  \lim_{\alpha \rightarrow 0+} G^{\mu}_{\alpha} f, \quad \mu\text{-a.e.}
$$
$(T^{\mu}_t)_{t>0}$ is said to be recurrent, if for any $f \in L^1(\R^d, \mu)$ with $f \geq 0$ $\mu$-a.e. we have
$$
G^{\mu}f  \in \{0, \infty\}, \;\; \mu \text{-a.e.},
$$
i.e. $\mu(\{0< G^{\mu}f < \infty\})=0$.\\
$(T^{\mu}_t)_{t>0}$ is said to be transient, if there exists $g \in L^1(\R^d, \mu)$ with $g> 0$  $\mu$-a.e. such that
$$
G^{\mu}g <\infty, \;\; \mu \text{-a.e.}
$$
\item[(vii)]
Given a sub-Markovian $C_0$-semigroup of contractions $(S_t)_{t>0}$ on $L^r(\R^d, \mu)$ for some $r \in [1, \infty)$ that satisfies $(S_t)_{t>0} = (T^{\mu}_t)_{t>0}$ on $L^r(\R^d, \mu)$, $(S_t)_{t>0}$ is called recurrent (resp. transient), if $(T^{\mu}_t)_{t>0}$ is recurrent (resp. transient).
\end{defn}

\begin{prop} \label{prop2.2}
\begin{itemize}
\item[(i)]
Let $r \in [1, p]$ and $\alpha>0$. For $h \in L^{q}(\R^d, \mu)$ with $q \in (1, \infty]$ and $\frac{1}{r}+\frac{1}{q}=1$,
$$
\int_{\R^d} (\alpha-L) u \cdot h d\mu =0, \quad \forall u \in C_0^{\infty}(\R^d)
$$
implies that $h=0$ $\mu$-a.e. if and only if $(L, C_0^{\infty}(\R^d))$ is $L^r(\R^d, \mu)$-unique.

\item[(ii)]
$\mu$ is an invariant measure  for $(T^{\mu}_t)_{t>0}$ if and only if $(L, C_0^{\infty}(\R^d))$ is $L^1(\R^d, \mu)$-unique.

\item[(iii)]
$\mu$ is an invariant measure for $(T^{\mu}_t)_{t>0}$ if and only if $(T'^{, \mu}_t)_{t>0}$ is conservative.

\item[(iv)]
Assume $\mu$ is finite. Then $\mu$ is an invariant measure for $(T^{\mu}_t)_{t>0}$ if and only if $(T^{\mu}_t)_{t>0}$ is conservative. Moreover, $(T^{\mu}_t)_{t>0}$ is recurrent if and only if $(T^{\mu}_t)_{t>0}$ is conservative.

\item[(v)]
$(T^{\mu}_t)_{t>0}$ is either recurrent or transient.
\end{itemize}
\end{prop}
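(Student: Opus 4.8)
The plan is to derive Proposition~\ref{prop2.2} by combining standard functional analysis of sub-Markovian $C_0$-semigroups with the $\mu$-duality between $(T^{\mu}_t)_{t>0}$ and $(T'^{,\mu}_t)_{t>0}$; most of the work is translation between equivalent formulations rather than hard estimates. For (i), one first checks that $Lu\in L^r(\R^d,\mu)$ for all $u\in C_0^{\infty}(\R^d)$ and $r\in[1,p]$ (by {\bf (S1)}, $Lu$ is compactly supported and lies in $L^p_{loc}$), and that $(L,C_0^{\infty}(\R^d))$ is dissipative on $L^r(\R^d,\mu)$ since the generator $\overline{L}^{\mu}$ of $(T^{\mu}_t)_{t>0}$ on $L^r(\R^d,\mu)$ extends it. By the Lumer--Phillips theorem, its closure in $L^r(\R^d,\mu)$ is $m$-dissipative exactly when $(\alpha-L)(C_0^{\infty}(\R^d))$ is dense in $L^r(\R^d,\mu)$ for one (hence every) $\alpha>0$; an $m$-dissipative operator being maximal dissipative, it then coincides with every generator extending $(L,C_0^{\infty}(\R^d))$, so this density is equivalent to $L^r(\R^d,\mu)$-uniqueness (cf.\ \cite{Eb,St99}). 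Finally, Hahn--Banach turns density of $(\alpha-L)(C_0^{\infty}(\R^d))$ into triviality of its annihilator $\{h\in L^q(\R^d,\mu):\int_{\R^d}(\alpha-L)u\cdot h\,d\mu=0\ \text{for all }u\in C_0^{\infty}(\R^d)\}$, which is the assertion of~(i).

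For (iii), the key input is that, with respect to $\mu$, the action of $(T^{\mu}_t)_{t>0}$ on $L^{\infty}(\R^d,\mu)$ is the adjoint of the action of $(T'^{,\mu}_t)_{t>0}$ on $L^1(\R^d,\mu)$; this follows from $\int_{\R^d}Lf\cdot g\,d\mu=\int_{\R^d}f\cdot L'^{,\mu}g\,d\mu$ for $f,g\in C_0^{\infty}(\R^d)$ (an integration by parts using the divergence form of $L,L'^{,\mu}$ and $\int_{\R^d}\langle\overline{\mathbf{B}},\nabla f\rangle\,d\mu=0$) together with the interpolation construction in Section~\ref{intro}. Then for $A\in\mathcal{B}(\R^d)$ and $n\in\N$ the adjoint relation gives $\int_{\R^d}(T^{\mu}_t1_A)1_{B_n}\,d\mu=\int_A T'^{,\mu}_t1_{B_n}\,d\mu$, and $n\to\infty$ by monotone convergence yields $\int_{\R^d}T^{\mu}_t1_A\,d\mu=\int_A T'^{,\mu}_t1_{\R^d}\,d\mu$; since $0\le T'^{,\mu}_t1_{\R^d}\le1$ $\mu$-a.e., this equals $\mu(A)$ for all $A$ precisely when $T'^{,\mu}_t1_{\R^d}=1$ $\mu$-a.e., i.e.\ $(T'^{,\mu}_t)_{t>0}$ is conservative. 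For (ii) I would invoke (i) with $r=1$, $q=\infty$, and verify that $h_0:=1_{\R^d}-\alpha G'^{,\mu}_{\alpha}1_{\R^d}$, with $G'^{,\mu}_{\alpha}$ the resolvent of $(T'^{,\mu}_t)_{t>0}$, lies in the annihilator: using $u\in C_0^{\infty}(\R^d)\subseteq D(\overline{L}^{\mu})$, $G'^{,\mu}_{\alpha}1_{\R^d}\in D(\overline{L}'^{,\mu})$ with $(\alpha-\overline{L}'^{,\mu})G'^{,\mu}_{\alpha}1_{\R^d}=1_{\R^d}$, the adjoint relation, and the infinitesimal invariance $\int_{\R^d}Lu\,d\mu=0$, one gets $\int_{\R^d}(\alpha-L)u\cdot h_0\,d\mu=\alpha\int_{\R^d}u\,d\mu-\alpha\int_{\R^d}u\,d\mu=0$. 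Thus if $\mu$ is not invariant then $(T'^{,\mu}_t)_{t>0}$ is non-conservative by (iii), so $h_0\neq0$ and $L^1$-uniqueness fails; the converse, that invariance of $\mu$ kills the \emph{entire} annihilator, is the delicate point (see below).

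For (iv), if $\mu$ is finite then $1_{\R^d}\in L^1(\R^d,\mu)$ with $0\le T^{\mu}_t1_{\R^d}\le1$ $\mu$-a.e., so invariance forces $\int_{\R^d}(1-T^{\mu}_t1_{\R^d})\,d\mu=0$, i.e.\ conservativeness; conversely, conservativeness of $(T^{\mu}_t)_{t>0}$ gives $\int_{\R^d}T'^{,\mu}_tg\,d\mu=\int_{\R^d}g\,d\mu$ for every $g\in L^1(\R^d,\mu)$ (adjoint relation with $h=1_{\R^d}$), whence $T'^{,\mu}_t1_{\R^d}=1$ $\mu$-a.e., i.e.\ $(T'^{,\mu}_t)_{t>0}$ is conservative, and then $\mu$ is invariant by (iii). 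Recurrence always implies conservativeness. For the remaining implication, assuming $\mu$ finite, $\mu$-invariant (just shown) and $(T^{\mu}_t)_{t>0}$ transient, one picks $g>0$ in $L^1(\R^d,\mu)\cap L^{\infty}(\R^d,\mu)$ with $G^{\mu}g<\infty$ $\mu$-a.e.; then $\frac1R\int_0^R T^{\mu}_tg\,dt\le\frac1R G^{\mu}g\to0$ $\mu$-a.e.\ as $R\to\infty$, while integrating in $\mu$ and using invariance gives $\int_{\R^d}\frac1R\int_0^R T^{\mu}_tg\,dt\,d\mu=\int_{\R^d}g\,d\mu$ for all $R$, so dominated convergence (bound $\|g\|_{\infty}\in L^1(\R^d,\mu)$) forces $\int_{\R^d}g\,d\mu=0$, a contradiction; hence $(T^{\mu}_t)_{t>0}$ is recurrent by (v). Finally, for (v) one fixes $g>0$ in $L^1(\R^d,\mu)$: the set $\{G^{\mu}g=\infty\}$ is $T^{\mu}$-invariant (if $T^{\mu}_s1_{\{G^{\mu}g=\infty\}}>0$ on a positive-measure subset of $\{G^{\mu}g<\infty\}$, then $T^{\mu}_s(G^{\mu}g)=\infty$ there, contradicting $T^{\mu}_s(G^{\mu}g)=G^{\mu}g-\int_0^s T^{\mu}_tg\,dt<\infty$ on $\{G^{\mu}g<\infty\}$); by the irreducibility of $(T^{\mu}_t)_{t>0}$, valid here because of the strict local ellipticity in {\bf (S1)}, this set is $\mu$-null or $\mu$-conull, giving transience in the first case and, in the second, $G^{\mu}f\in\{0,\infty\}$ $\mu$-a.e.\ for every $f\ge0$ by the standard solidarity argument, i.e.\ recurrence. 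All of this is collected essentially as stated in \cite{LT21in}; see also \cite{St99,Eb,FOT,GT2}.

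The main obstacle is the outstanding half of (ii): showing that $\mu$-invariance forces the full annihilator $\{h\in L^{\infty}(\R^d,\mu):\int_{\R^d}(\alpha-L)u\cdot h\,d\mu=0\ \forall u\in C_0^{\infty}(\R^d)\}$ to equal $\{0\}$, and not merely to contain the defect element $h_0$. Any such $h$ is a weak $L^{\infty}$-solution of $(\alpha-L'^{,\mu})h=0$, so by the elliptic regularity for measures invoked for Theorem~\ref{theo3:1} it admits a continuous version; one then has to argue --- via a sub-/super-solution comparison (a Riesz-type decomposition for $\alpha$-excessive functions of $(T'^{,\mu}_t)_{t>0}$), equivalently a minimality property of $(T'^{,\mu}_t)_{t>0}$ among sub-Markovian semigroups extending $(L'^{,\mu},C_0^{\infty}(\R^d))$ --- that $h$ is a scalar multiple of $h_0$, which vanishes exactly when $(T'^{,\mu}_t)_{t>0}$ is conservative, i.e.\ when $\mu$ is invariant. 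A secondary subtlety is the use of irreducibility in (v): without it the recurrence/transience dichotomy can genuinely fail, so one must exploit that the continuous, strictly locally elliptic coefficients in {\bf (S1)} make $(T^{\mu}_t)_{t>0}$ irreducible (via the elliptic Harnack inequality). The bookkeeping in (iii)--(v) --- identifying the $L^{\infty}$-action of $(T^{\mu}_t)_{t>0}$ as the $L^1$-adjoint of $(T'^{,\mu}_t)_{t>0}$, the monotone-convergence passages, and the averaging/dominated-convergence step --- is then routine.
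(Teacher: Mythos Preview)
Your proposal is substantially more detailed than the paper's own proof, which consists entirely of pointers to \cite{LT21in} (and, behind that, to \cite{St99,Eb,FOT,GT2}); you are in effect reconstructing the arguments those references contain. The sketches you give for (i), (iii), (iv) and (v) are correct and are exactly the standard ones: Lumer--Phillips plus Hahn--Banach for (i), $\mu$-duality plus monotone convergence for (iii), the sub-Markov estimate $0\le T^{\mu}_t 1_{\R^d}\le 1$ together with the averaging/dominated-convergence trick for (iv), and the invariant-set dichotomy driven by irreducibility for (v). Your remark that irreducibility is genuinely needed in (v), and that under {\bf (S1)} it comes from strict local ellipticity (via the Harnack inequality/strong Feller property), is accurate and matches how \cite{LT21in} proceeds.

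The one point you flag yourself is also the one real subtlety: the implication ``$\mu$ invariant $\Rightarrow$ $(L,C_0^{\infty}(\R^d))$ is $L^1(\R^d,\mu)$-unique'' in (ii). Your proposed resolution is the right one. In Stannat's construction \cite[Section~1]{St99}, $(T'^{,\mu}_t)_{t>0}$ is obtained as the increasing limit of the Dirichlet semigroups on exhausting balls, which makes $\alpha G'^{,\mu}_{\alpha}$ \emph{minimal} among sub-Markovian resolvents whose generator extends $(L'^{,\mu},C_0^{\infty}(\R^d))$. For any $h\in L^{\infty}(\R^d,\mu)$ in the annihilator one then shows, using this minimality together with the local elliptic regularity you already invoke, that $|h|\le \|h\|_{L^{\infty}}\big(1-\alpha G'^{,\mu}_{\alpha}1_{\R^d}\big)$; hence if $(T'^{,\mu}_t)_{t>0}$ is conservative (equivalently $\mu$ is invariant, by your (iii)), the right-hand side vanishes and $h=0$. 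This is precisely the content of \cite[Corollary~2.2]{St99} and \cite[Remark~3.4]{LT21in}. So your outline is complete once you make the minimality step explicit, and it is the same mechanism the cited references use.
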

\begin{proof}
(i) It follows from \cite[Lemma 1.11]{Eb} that $(L, C_0^{\infty}(\R^d))$ is $L^r(\R^d, \mu)$-unique if and only if $(L^{\mu}_r, D(L^{\mu}_r))$ is the 
closure of $(L, C_0^{\infty}(\R^d))$ on $L^r(\R^d, \mu)$. Using the fact that $G^{\mu}_{\alpha}=(\alpha-L^{\mu}_r)^{-1}$ is a bounded linear operator on $L^r(\R^d, \mu)$ for all $\alpha>0$, the latter above is equivalent to the fact that
\begin{equation} \label{densebe}
(\alpha-L)(C_0^{\infty}(\R^d)) \subset L^r(\R^d, \mu)  \text{ dense with respect to } \| \cdot \|_{L^r(\R^d, \mu)}  \text{ for any } \alpha>0.
\end{equation}
As a consequence of the Hahn-Banach theorem (\cite[Proposition 1.9]{BRE}) and the Riesz representation theorem (\cite[Theorems 4.11, 4.14]{BRE}), \eqref{densebe} is equivalent to the former of (i) (cf. \cite[Remark 3.4]{LT21in}). \\
(ii)--(iv) The assertions follow from \cite[Corollary 2.2]{St99}, \cite[Remark 2.13(i)]{LT20} and \cite[Remark 2.13(ii)]{LT20}, respectively. \\
(v) The assertion follows from \cite[Theorem 3.38(i)]{LT20} (cf. \cite[Proposition 4.9]{LT18}).
\end{proof}

\section{Conservativeness and existence of an invariant measure} \label{invnewassum}
Throughout this section, we deal with $(L, C_0^{\infty}(\R^d))$ satisfying {\bf (S1)} as in Section \ref{intro} (Introduction) with $\mu=\rho dx$ as  infinitesimally invariant measure for $(L, C_0^{\infty}(\R^d))$. If $\mu$ is finite and $(T^{\mu}_t)_{t>0}$ is conservative, then $(T^{\mu}_t)_{t>0}$ is recurrent by Proposition \ref{prop2.2}(iv), hence \cite[Theorem 3.15 and Theorem 3.17]{LT21in} apply and can be used to prove uniqueness of the infinitesimally invariant measures and invariant measures. If $\mu$ is infinite and $(T^{\mu}_t)_{t>0}$ is conservative, then \cite[Proposition 3.20]{LT21in} applies and can be used to prove the non-existence of finite infinitesimally invariant measures. In any case, conservativeness of $(T^{\mu}_t)_{t>0}$ under the assumption of {\bf (S1)} can be used to investigate finite infinitesimally invariant measures for $(L, C_0^{\infty}(\R^d))$.\\
\text{} \\
Let $B=(b_{ij})_{1 \leq i,j \leq d}$ be a matrix of  functions on $\R^d$. The real-valued functions $\Phi_B$, $|B|_{\infty}$ on $\R^d$ are defined as
\begin{equation} \label{defnmatrixnorm}
\Phi_B(x) := \inf_{\xi \in \R^d \setminus \{0\}} \frac{\langle B(x) \xi, \xi \rangle}{\|\xi\|^2}, \;\; \;\;
|B|_{\infty}(x):=\max_{1 \leq i,j \leq d} |b_{ij}(x)|,
\quad \; x \in \R^d.
\end{equation}
Since $A=(a_{ij})_{1\leq i,j \leq d}$ is locally uniformly strictly elliptic, it holds 
$\Phi_A(x)>0$ for all $x \in \R^d$.
By the Cauchy-Schwarz inequality, it follows
\begin{equation} \label{diffestima}
\|B(x) \xi \| \leq d |B|_{\infty}(x)\|\xi\|, \quad \forall x,  \xi \in \R^d.
\end{equation}
\text{}\\
The idea of the proof of Theorem \ref{theo3:1}(i) is inspired by the proof of \cite[Lemma 5.4]{RoShTr}. In case where $\rho(x)>0$ for all $x \in \R^d$ in \cite{RoShTr}, Theorem \ref{theo3:1}(i) generalizes \cite[Lemma 5.4]{RoShTr} in that more general coefficients $A$, $C$ and $\overline{\mathbf{B}}$ are covered in Theorem \ref{theo3:1}(i) to obtain the conservativeness of $(T^{\mu}_t)_{t>0}$ and $(T'^{, \mu}_t)_{t>0}$.

\begin{theo} \label{theo3:1}
Assume {\bf (S1)} holds. Then the following (i)--(ii) hold.
\begin{itemize}
\item[(i)]
Assume that there exist constants $c_1, c_2>0$, $\alpha \in [0,1)$, $\beta \in [0, \infty)$, $C_1>0$ and $N_0 \in \N$ such that 
\begin{equation*}
\mu(B_r) \leq c_1r^{\beta}+c_2 \;\; \text{ for all $r>0$}
\end{equation*}
and for a.e. $x\in \R^d\setminus \overline{B}_{N_0}$,
$$
|A|_{\infty}(x)+|C|^2_{\infty}(x) \Phi_A^{-1}(x)+\| \overline{\mathbf{B}}(x) \|^2 \leq C_1 \|x\|^{2\alpha}
$$
where {\color{blue} }
$\Phi_A$ is defined as in \eqref{defnmatrixnorm}. 
Then, $(T^{\mu}_t)_{t>0}$ and $(T'^{, \mu}_t)_{t>0}$ are conservative (or equivalently, by Proposition \ref{prop2.2} (ii), (iii), $(L, C_0^{\infty}(\R^d))$ and $(L'^{, \mu}, C_0^{\infty}(\R^d))$ are $L^1(\R^d, \mu)$-unique, and $\mu$ is an invariant measure for $(T^{\mu}_t)_{t>0}$ and $(T'^{, \mu}_t)_{t>0}$).
\item[(ii)]
Assume that
$$
\frac{|A|_{\infty}(x)+|C|^2_{\infty}(x) \Phi_A^{-1}(x)}{1+\|x\|^2}  +\frac{\|\overline{\mathbf{B}}(x)\|}{1+\|x\|} \in L^1(\R^d, \mu).
$$
Then, the same result as in (i) holds.
\end{itemize}
\end{theo}
\begin{proof}
(i)
Let $h \in L^{\infty}(\R^d, \mu)$ be such that
\begin{equation} \label{basicinveq}
\int_{\R^d} (1-L)u \cdot h d\mu = 0, \qquad \forall u \in C_0^{\infty}(\R^d).
\end{equation}
In order to show $(L, C_0^{\infty}(\R^d))$ is $L^1(\R^d, \mu)$-unique, it is enough to show that $h=0$, $\mu$-a.e. by Proposition \ref{prop2.2}(i). Note that $h \in H^{1,p}_{loc}(\R^d) \cap C(\R^d)$ by \cite[Corollary 2.10]{BKR2}, hence using integration by parts in \eqref{basicinveq} and an approximation,
\begin{equation} \label{invintegraby1}
\frac12 \int_{\R^d} \langle (A+C) \nabla u, \nabla h \rangle d\mu - \int_{\R^d} \langle \overline{\mathbf{B}}, \nabla u \rangle h d\mu + \int_{\R^d} uh d\mu = 0, \qquad \forall u \in H^{1,2}(\R^d)_0.
\end{equation}
Given $n \in \N$, let $u_n \in H^{1,2}(\R^d)_{0}$ be such that $1_{B_n} \leq u_n \leq 1_{B_{2n}}$ and $\|\nabla u_n\|_{\infty} \leq \frac{K}{n}$ for some constant $K>0$ which is independent of $n \in \N$. Note that since $h u_n, h u_n^2 \in  H^{1,2}(\R^d)_{0,b}$, it follows that
$$
 \int_{\R^d} \langle \overline{\mathbf{B}}, \nabla (h u_n) \rangle h u_n d\mu=0,
$$ 
hence
\begin{equation*} \label{divfreebu}
\int_{\R^d} \langle \overline{\mathbf{B}}, \nabla (hu_n^2) \rangle h d\mu =\int_{\R^d} \langle \overline{\mathbf{B}}, \nabla u_n \rangle h^2 u_n d\mu.
\end{equation*}
Take $n \in \N$ with $n \geq N_0$. Since $h u_n^2 \in H^{1,2}_0(\R^d, \mu)_0$, using \eqref{invintegraby1} and the Cauchy-Schwarz inequality,
\begin{eqnarray*}
\int_{B_{n}} h^2 d\mu &\leq& \int_{\R^d} (hu_n^2) h d\mu = -\frac12 \int_{\R^d} \langle (A+C) \nabla (hu_n^2), \nabla h \rangle d\mu +\int_{\R^d} \langle \overline{\mathbf{B}}, \nabla (hu_n^2) \rangle h d\mu \\
&=& -\frac12 \int_{\R^d} \langle \nabla (hu_n^2), (A+C^T)\nabla h \rangle d\mu+\int_{\R^d} \langle \overline{\mathbf{B}}, \nabla u_n \rangle h^2 u_n d\mu \\
 &=& -\frac{1}{2} \int_{B_{2n}} \langle u_n^2 (A+C^T) \nabla h, \nabla h\rangle d\mu - \int_{B_{2n}} \langle (A+C^T) \nabla h, h u_n \nabla u_n  \rangle d\mu +\int_{B_{2n}} \langle \overline{\mathbf{B}}, \nabla u_n \rangle h^2 u_n d\mu \\
&\leq& -\frac{1}{2} \int_{B_{2n}} \langle A \nabla h, \nabla h \rangle u_n^2 d\mu +  \left(\int_{B_{2n}} \langle A \nabla h, \nabla h \rangle u_n^2 d\mu \right)^{1/2} \left( \int_{B_{2n}} \langle A \nabla u_n, \nabla u_n \rangle h^2 d\mu \right)^{1/2} \\
&& \;\;\; +\int_{B_{2n}} |\langle C^T \nabla h, hu_n \nabla u_n\rangle|  d\mu +\int_{B_{2n}} \|\overline{\mathbf{B}}\| \|\nabla u_n \| h^2 u_n d\mu 
\end{eqnarray*}
Using \eqref{diffestima}, the Cauchy-Schwarz inequality and the Young inequality $|ab|\leq \frac{a^2}{8}+2b^2$,
\begin{eqnarray}
\int_{B_{n}} h^2 d\mu  &\leq&  -\frac{3}{8} \int_{B_{2n}} \langle A \nabla h, \nabla h \rangle u_n^2 d\mu +2  \int_{B_{2n}} d|A|_{\infty} \|\nabla u_n\|^2h^2 d\mu  \nonumber \\
&& \quad +\int_{B_{2n}}  d|C|_{\infty} |h|u_n  \|\nabla h\| \| \nabla u_n\| d\mu +\int_{B_{2n}} \| \overline{\mathbf{B}}\| \|\nabla u_n \|h^2 u_n d\mu  \nonumber \\ 
&=& -\frac{1}{4}\int_{B_{2n}} \Phi_A \|\nabla h \|^2 u_n^2 d\mu +2  \int_{B_{2n} } d|A|_{\infty} \|\nabla u_n\|^2 h^2 d\mu \nonumber \\
&&\quad +2\int_{B_{2n}} \frac{d^2|C|_{\infty}^2}{\Phi_A}  \|\nabla u_n \|^2  h^2 d\mu +\int_{B_{2n}} \|\overline{\mathbf{B}}\| \|\nabla u_n \| h^2 u_n d\mu. \label{imporequin}
\end{eqnarray}
Therefore,
\begin{eqnarray}
\int_{B_{n}} h^2d\mu& \leq& \left(2^{2\alpha+1} C_1 K^2 (d+d^2)\frac{1}{n^{2-2\alpha}} + 2^{\alpha} \sqrt{C_1}  K \frac{1}{n^{1-\alpha}} \right) \int_{B_{2n} } h^2 d \mu  \nonumber \\
&\leq& \underbrace{\left(2^{2\alpha+1} C_1 K^2 (d+d^2) + 2^{\alpha} \sqrt{C_1} K \right)  }_{=:C_2}\frac{1}{n^{1-\alpha}} \int_{B_{2n}} h^2 d\mu. \nonumber
\end{eqnarray}
Let $\phi(n):=\int_{B_n} h ^2 d\mu$. Then for any $k \in \N$ with $k > \frac{\beta}{1-\alpha}$,
$$
\phi(n) \leq \left(\frac{C_2}{n^{1-\alpha}}\right)^k \phi(2^k n) \leq \left(\frac{C_2}{n^{1-\alpha}}\right)^k \|h\|^2_{L^{\infty}(\R^d)}\, (c_1 2^{k \beta} n^{\beta}+c_2)= \frac{c_1C_2^k 2^{k \beta} \|h\|^2_{L^{\infty}(\R^d)}}{n^{(1-\alpha)k-\beta}} +\frac{c_2C_2^k \|h\|^2_{L^{\infty}(\R^d)}}{n^{(1-\alpha)k}}. 
$$
Thus, $\int_{\R^d} h^2 d\mu = \lim_{n \rightarrow \infty} \phi(n)=0$, which yields $h=0$, $\mu$-a.e. Therefore, $(L, C_0^{\infty}(\R^d))$ is $L^1(\R^d, \mu)$-unique.  By replacing $\overline{\mathbf{B}}$, $C$ with $-\overline{\mathbf{B}}$, $C^T$, respectively, it also follows that $(L'^{, \mu}, C_0^{\infty}(\R^d))$ is $L^1(\R^d, \mu)$-unique. The remaining assertions follow from Proposition \ref{prop2.2}(ii), (iii). \\[5pt]
(ii) 
Note that \eqref{imporequin} holds under the assumption of {\bf (S1)}.
For $x \in B_{2n} \setminus B_n$, we have 
$$
\frac{\|x\|+1}{4} \leq \frac{\|x\|}{2}<n \; \text{ and } \; \frac{\|x\|^2+1}{8}\leq \frac{\|x\|^2}{4}< n^2.
$$
Thus since $\nabla u_n=0$ on $B_n$, \eqref{imporequin} implies that
\begin{eqnarray*}
\int_{B_{n}} h^2 d\mu  &\leq& -\frac{1}{4} \int_{B_{2n}} \Phi_A \|\nabla h \|^2 u_n^2  d\mu 	+ 2dK^2 \int_{B_{2n} \setminus B_n} \frac{|A|_{\infty}}{n^2}  h^2 d\mu  + 2d^2K^2 \int_{B_{2n} \setminus B_n} \frac{|C|^2_{\infty} \cdot \Phi_A^{-1}}{n^2}  h^2 d\mu \\
&& \;\; +  K\int_{B_{2n} \setminus \overline{B}_n}  \frac{\| \overline{\mathbf{B}}\|}{n} h^2  u_n d\mu \leq 16d K^2 \|h\|^2_{L^\infty(\R^d)} \int_{B_{2n} \setminus B_n} \frac{|A|_{\infty}(x)}{\|x\|^2+1} d\mu \\
&&    \;\;+ 16d^2K^2\|h\|^2_{L^\infty(\R^d)} \int_{B_{2n} \setminus B_n} \frac{|C|^2_{\infty}(x) \cdot \Phi_A^{-1}(x)}{\|x\|^2+1} d\mu +  4K\|h\|^2_{L^\infty(\R^d)} \int_{B_{2n} \setminus B_n} \frac{\| \overline{\mathbf{B}}(x)\|}{\|x\|+1} d\mu. \\
\end{eqnarray*}
Since the assumptions of (ii) imply
$$ 
\lim_{n \rightarrow \infty} \int_{B_{4n}\setminus B_{2n}} \frac{|A|_{\infty}(x)}{\|x\|^2+1} d\mu =\lim_{n \rightarrow \infty}  \int_{B_{4n}\setminus B_{2n}} \frac{|C|^2_{\infty}(x) \cdot \Phi_A^{-1}(x)}{\|x\|^2+1} d\mu=\lim_{n \rightarrow \infty} \int_{B_{4n}\setminus B_{2n}}\frac{\|\overline{\mathbf{B}}(x)\|}{\|x\|+1} d\mu=0,
$$
it follows
$$
\int_{\R^d} h^2 d\mu = \lim_{n \rightarrow \infty} \int_{B_{2n}} h^2 d\mu = 0,
$$
hence $h=0$ $\mu$-a.e. Therefore, remaining assertions follow as in (i).
\end{proof}

\begin{rem}   \label{remark:3.2}
\begin{itemize}
\item[(i)]
In contrast to \cite[Propositon 3.31]{LT20} (originally from \cite[Corollary 15(i)]{GT17}), Theorem \ref{theo3:1}(i)  does not require any growth condition on the partial derivative of $c_{ij}$, $1\leq i,j \leq d$. 
This advantage is illustrated in Example \ref{ex:3.10}(iii). On the other hand, the order of $\alpha$ is not optimal in Theorem \ref{theo3:1}(i) (see (ii) right below).

\item[(ii)]
If $a_{ij}, c_{ij}, \|\mathbf{\overline{B}}\| \in L^1(\R^d, \mu)$ for all $1 \leq i,j \leq d$, then the condition of Theorem \ref{theo3:1}(ii) follows. Hence,  Theorem \ref{theo3:1}(ii) is a partial  (since the local regularity assumptions there are more general than {\bf(S1)} here) generalization of \cite[Proposition 1.10(a)]{St99}. If $\mu$ is finite, then Theorem \ref{theo3:1}(ii) shows that $\alpha$ is allowed to be $1$ in Theorem \ref{theo3:1}(i). If $C=0$, then $\alpha$ is allowed to be $1$ in Theorem \ref{theo3:1}(i) by \cite[Proposition 3.31]{LT20}.

\end{itemize}
\end{rem}
As Theorem \ref{theo3:1}(ii), the following result on $L^r$-uniqueness allows a more general growth condition than Theorem \ref{theo3:1}(i) on the coefficients of $L$ to obtain recurrence in the case when $\mu$ is finite. For its proof, we adapt the method of \cite[Theorem 2.3]{Eb}, but partially improve it by considering a growth condition for $C=(c_{ij})_{1 \leq i,j \leq d}$ and more general local regularity assumptions on $A$ and $\overline{\mathbf{B}}$.

\begin{theo} \label{theo:3.3}
Assume that {\bf (S1)} holds and that there exist constants $K>0$ and $N_0 \in \N$ with $N_0 \geq 2$ such that
$$
\frac{\langle A(x)x, x \rangle}{\|x\|^2} + |C|_{\infty}^2(x) \Phi_A^{-1}(x) + |\langle \overline{\mathbf{B}}(x),x \rangle| \ln\|x\| \leq K \left( \|x\| \cdot  \ln \|x\|  \right)^2,  \quad \; \text{ for a.e. } x \in \R^d \setminus \overline{B}_{N_0}.
$$
Then $(L, C_0^{\infty}(\R^d))$ and $(L'^{, \mu}, C_0^{\infty}(\R^d))$ are $L^r(\R^d, \mu)$-unique for all $r \in (1, 2]$. In particular, if $\mu$ is finite, then $(L, C_0^{\infty}(\R^d))$ and $(L'^{, \mu}, C_0^{\infty}(\R^d))$ are $L^1(\R^d, \mu)$-unique, and $(T^{\mu}_t)_{t>0}$ and $(T_t'^{, \mu})_{t>0}$ are recurrent as well as conservative.
\end{theo}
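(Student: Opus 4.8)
The plan is to follow the scheme of the proof of Theorem \ref{theo3:1}(i), but to test against an $L^{r}$-type nonlinearity of $h$ and to use cut-offs tuned to the scale $\|x\|\ln\|x\|$; this is the variant of \cite[Theorem 2.3]{Eb} incorporating $C$. Fix $q\in[2,\infty)$ and put $r=q/(q-1)\in(1,2]$. By Proposition \ref{prop2.2}(i) (with $\alpha=1$) it suffices to show that every $h\in L^{q}(\R^{d},\mu)$ with $\int_{\R^{d}}(1-L)u\cdot h\,d\mu=0$ for all $u\in C_{0}^{\infty}(\R^{d})$ vanishes $\mu$-a.e. By \cite[Corollary 2.10]{BKR2}, $h\in H^{1,p}_{loc}(\R^{d})\cap C(\R^{d})$, and integration by parts yields, exactly as in \eqref{invintegraby1},
$$\tfrac12\int_{\R^{d}}\langle(A+C)\nabla u,\nabla h\rangle\,d\mu+\int_{\R^{d}}\langle\overline{\mathbf B},\nabla u\rangle h\,d\mu+\int_{\R^{d}}uh\,d\mu=0$$
for all admissible $u$ (in particular all bounded, compactly supported $u$ in the class used in the proof of Theorem \ref{theo3:1}).

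Next I would insert $u=\phi(h)\,\xi_{n}^{2}$, where $\phi(t):=t|t|^{q-2}$ (so $\phi'(t)=(q-1)|t|^{q-2}\ge0$, $t\phi(t)=|t|^{q}$ and $\Psi(t):=\int_{0}^{t}\phi'(s)s\,ds=\tfrac{q-1}{q}|t|^{q}$) and $\xi_{n}$ are radial Lipschitz cut-offs to be chosen; since $h$ is continuous, $\phi(h)\xi_{n}^{2}$ is bounded and compactly supported, hence admissible. Expanding $\nabla(\phi(h)\xi_{n}^{2})=\phi'(h)\xi_{n}^{2}\nabla h+2\phi(h)\xi_{n}\nabla\xi_{n}$, using $\langle C\nabla h,\nabla h\rangle\equiv0$ (antisymmetry of $C$), and applying the $\mu$-divergence-freeness of $\overline{\mathbf B}$ to $\xi_{n}^{2}\Psi(h)$ (the same device used for $\overline{\mathbf B}$ in the proof of Theorem \ref{theo3:1}), one is led to
$$\int_{\R^{d}}|h|^{q}\xi_{n}^{2}\,d\mu+\tfrac{q-1}{2}\int_{\R^{d}}|h|^{q-2}\xi_{n}^{2}\langle A\nabla h,\nabla h\rangle\,d\mu=R_{n}^{A}+R_{n}^{C}+R_{n}^{\mathbf B},$$
with $R_{n}^{A}=-\int\phi(h)\xi_{n}\langle A\nabla\xi_{n},\nabla h\rangle\,d\mu$, $R_{n}^{C}=-\int\phi(h)\xi_{n}\langle C\nabla\xi_{n},\nabla h\rangle\,d\mu$ and $R_{n}^{\mathbf B}=-\tfrac{2}{q}\int|h|^{q}\xi_{n}\langle\overline{\mathbf B},\nabla\xi_{n}\rangle\,d\mu$. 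Writing $|\phi(h)|=|h|^{(q-2)/2}|h|^{q/2}$, applying the Cauchy--Schwarz inequality in the $A$-inner product in $R_{n}^{A}$, the bounds $\|C\nabla\xi_{n}\|\le d|C|_{\infty}\|\nabla\xi_{n}\|$ (from \eqref{diffestima}) and $\|\nabla h\|\le\Phi_{A}^{-1/2}\langle A\nabla h,\nabla h\rangle^{1/2}$ in $R_{n}^{C}$, and Young's inequality, the parts of $R_{n}^{A},R_{n}^{C}$ carrying $\langle A\nabla h,\nabla h\rangle$ get absorbed into the good term $\tfrac{q-1}{2}\int|h|^{q-2}\xi_{n}^{2}\langle A\nabla h,\nabla h\rangle\,d\mu$; using moreover that $\xi_{n}$ is radial, so $\nabla\xi_{n}(x)=\xi_{n}'(\|x\|)\,x/\|x\|$ and hence $\langle A\nabla\xi_{n},\nabla\xi_{n}\rangle=\|\nabla\xi_{n}\|^{2}\langle Ax,x\rangle/\|x\|^{2}$ and $|\langle\overline{\mathbf B},\nabla\xi_{n}\rangle|=\|\nabla\xi_{n}\|\,|\langle\overline{\mathbf B},x\rangle|/\|x\|$ (this is why the hypothesis need only control $\langle Ax,x\rangle/\|x\|^{2}$ and $|\langle\overline{\mathbf B},x\rangle|$, not $|A|_{\infty}$ and $\|\overline{\mathbf B}\|$), what remains is
$$\int_{\R^{d}}|h|^{q}\xi_{n}^{2}\,d\mu\le C\int_{\R^{d}}|h|^{q}\Big(\|\nabla\xi_{n}\|^{2}\big(\tfrac{\langle Ax,x\rangle}{\|x\|^{2}}+|C|_{\infty}^{2}\Phi_{A}^{-1}\big)+\|\nabla\xi_{n}\|\,\tfrac{|\langle\overline{\mathbf B},x\rangle|}{\|x\|}\Big)\,d\mu,$$
with $C=C(d)$ independent of $q$. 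The simultaneous absorption of $R_{n}^{A}$ and $R_{n}^{C}$ into this single good term --- in particular handling $C$ through the scalar $|C|_{\infty}^{2}\Phi_{A}^{-1}$, which is precisely what lets the hypothesis dispense with any derivative of $C$ --- is the crux, together with checking that $\phi(h)\xi_{n}^{2}$ is admissible even though $h$ need not be globally bounded (rescued by the compact support of $\xi_{n}$ and the continuity of $h$).

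Then I would choose, for $n\ge\max(N_{0},3)$, $\xi_{n}=1$ on $\overline B_{n}$, $\xi_{n}=0$ off $B_{R_{n}}$, and $\xi_{n}(x)=(\ln\ln R_{n}-\ln\ln\|x\|)/(\ln\ln R_{n}-\ln\ln n)$ for $n\le\|x\|\le R_{n}$, with $R_{n}$ large enough (e.g.\ $R_{n}=e^{e^{n}}$) that $\varepsilon_{n}:=(\ln\ln R_{n}-\ln\ln n)^{-1}\to0$. Then $0\le\xi_{n}\le1$, $\xi_{n}\uparrow1$ pointwise, and $\|\nabla\xi_{n}(x)\|=\varepsilon_{n}/(\|x\|\ln\|x\|)$ on the annulus $\{n\le\|x\|\le R_{n}\}\subseteq\{\|x\|\ge N_{0}\}$ (and $=0$ elsewhere). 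On that annulus the growth hypothesis gives $\|\nabla\xi_{n}\|^{2}(\tfrac{\langle Ax,x\rangle}{\|x\|^{2}}+|C|_{\infty}^{2}\Phi_{A}^{-1})\le K\varepsilon_{n}^{2}$ and $\|\nabla\xi_{n}\|\,\tfrac{|\langle\overline{\mathbf B},x\rangle|}{\|x\|}\le K\varepsilon_{n}$, so
$$\int_{\R^{d}}|h|^{q}\xi_{n}^{2}\,d\mu\le C''(\varepsilon_{n}+\varepsilon_{n}^{2})\,\|h\|_{L^{q}(\R^{d},\mu)}^{q}\longrightarrow0\qquad(n\to\infty)$$
with $C''$ depending only on $d$ and $K$, and monotone convergence gives $\|h\|_{L^{q}(\R^{d},\mu)}=0$, i.e.\ $h=0$ $\mu$-a.e. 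Hence $(L,C_{0}^{\infty}(\R^{d}))$ is $L^{r}(\R^{d},\mu)$-unique for all $r\in(1,2]$; since the hypothesis is invariant under $C\mapsto C^{T}$, $\overline{\mathbf B}\mapsto-\overline{\mathbf B}$ (as $|C^{T}|_{\infty}=|C|_{\infty}$ and $|\langle-\overline{\mathbf B},x\rangle|=|\langle\overline{\mathbf B},x\rangle|$), so is $(L'^{,\mu},C_{0}^{\infty}(\R^{d}))$.

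Finally, assume $\mu$ is finite. Taking $q=2$ above, $L^{2}$-uniqueness means (Proposition \ref{prop2.2}(i) with $r=q=2$) that $(1-L)(C_{0}^{\infty}(\R^{d}))$ is dense in $L^{2}(\R^{d},\mu)$; since $\|f\|_{L^{1}(\R^{d},\mu)}\le\mu(\R^{d})^{1/2}\|f\|_{L^{2}(\R^{d},\mu)}$ and $L^{2}(\R^{d},\mu)$ is dense in $L^{1}(\R^{d},\mu)$, it is also dense in $L^{1}(\R^{d},\mu)$, i.e.\ (Proposition \ref{prop2.2}(i) with $r=1$) $(L,C_{0}^{\infty}(\R^{d}))$ --- and likewise $(L'^{,\mu},C_{0}^{\infty}(\R^{d}))$ --- is $L^{1}(\R^{d},\mu)$-unique. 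Then Proposition \ref{prop2.2}(ii) shows $\mu$ is an invariant measure for $(T^{\mu}_{t})_{t>0}$ and Proposition \ref{prop2.2}(iv) shows that $(T^{\mu}_{t})_{t>0}$ --- and similarly $(T'^{,\mu}_{t})_{t>0}$ --- is conservative and recurrent.
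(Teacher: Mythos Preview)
Your proposal is correct and follows essentially the same scheme as the paper: test the variational identity with $u=h|h|^{q-2}w^{2}$ (the paper writes $v=w^{2}$), use the antisymmetry of $C$ and the $\mu$-divergence-freeness of $\overline{\mathbf B}$ exactly as you do, absorb the cross terms via Cauchy--Schwarz/Young into the good term $\tfrac{q-1}{2}\int |h|^{q-2}w^{2}\langle A\nabla h,\nabla h\rangle\,d\mu$, and finish with radial cutoffs adapted to the scale $\|x\|\ln\|x\|$. The one technical difference is the cutoff: the paper takes $w_{k}(x)=\min\big((k-\ln\ln\ln\|x\|)^{+},1\big)$, so that $\|\nabla w_{k}(x)\|=(\|x\|\ln\|x\|\ln\ln\|x\|)^{-1}$ on the annulus $A_{k}\setminus A_{k-1}$ where $\ln\ln\|x\|\ge e^{k-1}$, and the extra $1/\ln\ln\|x\|$ factor supplies the decay as $k\to\infty$; you instead use a normalized double-log cutoff and let the normalizing constant $\varepsilon_{n}\to 0$ do the work. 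Both choices lead to the same conclusion. For the final step when $\mu$ is finite, the paper simply observes that $L^{\infty}(\R^{d},\mu)\subset L^{q}(\R^{d},\mu)$ and applies Proposition~\ref{prop2.2}(i) directly, rather than arguing via density of $(1-L)(C_{0}^{\infty}(\R^{d}))$; your density argument is an equally valid alternative.
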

\begin{proof}
For $r \in (1, 2]$, take  $q \in [2, \infty)$ so that $\frac{1}{r}+\frac{1}{q}=1$. Let $h \in L^q(\R^d, \mu)$ be such that
\begin{equation} \label{basicinveqlp}
\int_{\R^d} (1-L)u \cdot h d\mu = 0, \qquad \forall u \in C_0^{\infty}(\R^d). 
\end{equation}
To show that  $(L, C_0^{\infty}(\R^d))$ is $L^r(\R^d, \mu)$-unique, it is by Proposition \ref{prop2.2}(i) enough to show that $h=0$, $\mu$-a.e. By \cite[Corollary 2.10]{BKR2}, we obtain $h \in H^{1,p}_{loc}(\R^d) \cap C(\R^d)$, hence using integration by parts in \eqref{basicinveqlp} and an approximation, it follows
\begin{equation} \label{invintegrabylp}
\frac12 \int_{\R^d} \langle (A+C) \nabla u, \nabla h \rangle d\mu - \int_{\R^d} \langle \overline{\mathbf{B}}, \nabla u \rangle h d\mu + \int_{\R^d} uh d\mu = 0, \qquad \forall u \in H^{1,2}_{loc}(\R^d)_0.
\end{equation}
Note that by the chain rule, $h|h|^{q-2} \in H^{1,p}_{loc}(\R^d) \cap C(\R^d)$ and 
$$
\nabla(h |h|^{q-2}) = (q-1) |h|^{q-2}\nabla h.
$$
Let $v \in H^{1, \infty}(\R^d)_0$. By substituting $v h |h|^{q-2} \in H^{1,2}_0(\R^d, \mu)_0$ for $u$ in \eqref{invintegrabylp}, it follows
\begin{eqnarray*}
\int_{\R^d} v|h|^q d\mu &=&-\frac{q-1}{2} \int_{\R^d} \langle (A+C) \nabla h, \nabla h \rangle |h|^{q-2} vd\mu  + \frac{q-1}{q} \int_{\R^d} \langle \overline{\mathbf{B}}, \nabla (v|h|^q) \rangle d\mu \\
&& \quad  -\frac12 \int_{\R^d} \langle (A+C)\nabla v, \nabla h \rangle h |h|^{q-2} d\mu +\frac{1}{q} \int_{\R^d} \langle \overline{\mathbf{B}}, \nabla v \rangle |h|^{q} d\mu,
\end{eqnarray*}
where we used that
\begin{eqnarray*}
&&h\nabla (v h|h|^{q-2})  = (\nabla v) |h|^{q} + v (q-1) |h|^{q-2} h\nabla h \\
&&= \frac{q-1}{q} |h|^q \nabla v  +(q-1) v |h|^{q-2} h\nabla h  +\frac{1}{q} |h|^q \nabla v = \frac{q-1}{q} \nabla (v |h|^q) +\frac{1}{q}|h|^q\nabla v.
\end{eqnarray*}
Choosing $v = w^2$, where $w \in H^{1,\infty}(\R^d)_0$ and using the Cauchy-Schwarz and the Young inequality $|a b| \leq \frac{q-1}{8}{a^2} +\frac{2}{q-1}b^2$, we obtain
\begin{eqnarray*}
&& \hspace{-1.5em}\int_{\R^d} w^2|h|^q d\mu + \frac{q-1}{2} \int_{\R^d} \langle A \nabla h, \nabla h \rangle |h|^{q-2} w^2 d\mu  \leq  - \int_{\R^d} \langle (A+C) \nabla w, \nabla h \rangle wh|h|^{q-2} d\mu +\frac{2}{q} \int_{\R^d} \langle \overline{\mathbf{B}}, \nabla w \rangle w |h|^q d\mu \\
&& \quad \leq  \left(\int_{\R^d} \langle A \nabla w, \nabla w \rangle  |h|^{q} d\mu \right)^{1/2} \left(\int_{\R^d} \langle A \nabla h, \nabla h \rangle |h|^{q-2} w^2d\mu   \right)^{1/2}  \\
&& \quad \quad \qquad   -\frac{2}{q} \int_{\R^d} \langle \overline{\mathbf{B}}, \nabla w \rangle w |h|^q d\mu  +  \int_{\R^d} d |C|_{\infty} \|\nabla w \| \|\nabla h \| |h|^{q-1} |w|d\mu \\
&&\quad  \leq \frac{q-1}{8} \int_{\R^d}  \langle A \nabla h, \nabla h \rangle |h|^{q-2}  w^2d\mu   +\frac{2}{q-1}\int_{\R^d} \langle  A \nabla w, \nabla w  \rangle  |h|^{q} d\mu   +\frac{2}{q} \int_{\R^d} |\langle \overline{\mathbf{B}}, \nabla w \rangle| |w| |h|^q  d\mu \\
&&\quad \quad \qquad +\frac{q-1}{8} \int_{\R^d}  \Phi_A \| \nabla h \|^2 |h|^{q-2}  w^2  d\mu  + \frac{2}{q-1}\int_{\R^d} d^2 |C|_{\infty}^2 \Phi_A^{-1} \|\nabla w\|^2  |h|^{q}d\mu.
\end{eqnarray*}
Therefore, 
\begin{eqnarray}
&&\int_{\R^d} w^2 |h|^q d\mu \leq\int_{\R^d} w^2|h|^q d\mu + \frac{q-1}{4} \int_{\R^d}  \langle A \nabla h, \nabla h \rangle |h|^{q-2} w^2 d\mu    \nonumber \\
&\leq&  \frac{2}{q-1}\int_{\R^d} \langle  A \nabla w, \nabla w  \rangle  |h|^{q} d\mu  + \frac{2}{q-1}\int_{\R^d} d^2 |C|_{\infty}^2 \Phi_A^{-1} \|\nabla w\|^2  |h|^q d\mu +\frac{2}{q} \int_{\R^d} |\langle \overline{\mathbf{B}}, \nabla w \rangle| |w| |h|^q  d\mu. \quad \qquad \; \label{fundineuniq}
\end{eqnarray}
Define a function $\ell$ on $\R$ by $\ell(t)  := \ln t$ if $t>1$ and $\ell(t):=0$ if $t \leq 1$. Then $\ell$ is monotonically increasing and Lipschitz continuous on $\R$. Moreover,  $(\ell \circ \ell \circ \ell)(t)=\ln \ln \ln t$ if $t \geq e^e$ and $(\ell \circ \ell \circ \ell)(t)=0$, $t < e^e$. For $k \in \N$ with $k \geq N_0+1$, define
$$
A_k :=  \{ x \in \R^d \mid (\ell \circ \ell \circ \ell)(\|x\|) \leq k \} \;\;\text{ and }\;\; w_k := \min \left(  \Big(k- (\ell \circ \ell \circ \ell)(\|x\|)\Big)^+, 1   \right).
$$
Then $w_k \in H^{1,\infty}(\R^d)_0$ with $\|w_k\|_{L^{\infty}(\R^d)} \leq 1$, 
$w_k(x) =1$ on $A_{k-1}$, $w_k=0$ on $\R^d \setminus A_k$, and $w_k(x)= k-\ln \ln \ln\|x\|$  for all $x \in A_k \setminus A_{k-1}$. Therefore, we obtain $\nabla w_k=0$ on $A_{k-1} \cup ({\R^d} \setminus A_{k})$ and 
$$
\nabla w_k(x)= \left(\frac{1}{\|x\| \cdot  \ln \|x\| \cdot \ln \ln \|x\|} \right) \frac{x}{\|x\|}, \quad \text{ for a.e. $x \in A_k \setminus A_{k-1}$}.
$$
Therefore by replacing $w$ with $w_k$ in \eqref{fundineuniq},
\begin{eqnarray*}
\int_{A_{k-1}} |h|^q d\mu &\leq& \sup_{A_k \setminus A_{k-1}} \left( \frac{\langle A(x)x, x \rangle}{\|x\|^2} \frac{1}{\left(\|x\| \cdot  \ln \|x\| \cdot \ln \ln \|x\| \right)^2}  \right) \cdot \frac{2 \|h\|_{L^q(\R^d, \mu)}}{q-1}  \\
&& \;\; + \sup_{A_k \setminus A_{k-1}}  \left(   \frac{|C|_{\infty}^2(x) \Phi_A^{-1}(x)}{\left(\|x\| \cdot  \ln \|x\| \cdot \ln \ln \|x\| \right)^2} \right)  \cdot \frac{2d^2 \|h\|_{L^q(\R^d, \mu)}}{q-1} \\
&& \quad \;\; + \sup_{A_k \setminus A_{k-1}} \left( \frac{|\langle \overline{\mathbf{B}}(x),x \rangle|}{\|x\|} \frac{\|x\| \ln \|x\|}{(\|x\| \cdot  \ln \|x\|)^2 \cdot \ln \ln \|x\| }\right) \cdot  \frac{2\|h\|_{L^q(\R^d, \mu)}}{q} \\
&\leq& \left(\frac{2K}{(e^{k-1})^2}  + \frac{K}{e^{k-1}} \right)\frac{2d^2\|h\|_{L^q(\R^d, \mu)}}{q-1}.
\end{eqnarray*}
Since $A_k \subset A_{k+1}$ for all $k \in \N$ and $\R^d =\cup_{k \in \N} A_k$, we obtain $\int_{\R^d} |h|^q d\mu =\lim_{k \rightarrow \infty}\int_{A_k} |h|^q d\mu=0$, hence $h=0$, $\mu$-a.e. Replacing $C$, $\overline{\mathbf{B}}$ with $C^T$, $-\overline{\mathbf{B}}$, respectively, we also obtain that $(L'^{, \mu}, C_0^{\infty}(\R^d))$ is $L^r(\R^d, \mu)$-unique. The rest of the assertion follows from Proposition \ref{prop2.2}(i)-(v).
\end{proof}

\section{Uniqueness of invariant measures} \label{invasec}
It is shown in \cite[Theorems 3.15, 3.17]{LT21in} that recurrence of $(T^{\mu}_t)_{t>0}$ implies the uniqueness of $\mu$ as infinitesimally invariant measure for $(L, C_0^{\infty}(\R^d))$ and as invariant measure for $(T^{\mu}_t)_{t>0}$ in the sense of Definition \ref{conserdef}(iv). So, for instance, if the growth condition on the coefficients in Theorem \ref{theo:3.3} holds and $\mu$ is finite, then $(T^{\mu}_t)_{t>0}$ is recurrent and $\mu$ is the unique infinitesimally invariant measure for $(L, C_0^{\infty}(\R^d))$ and invariant measure for $(T^{\mu}_t)_{t>0}$. In order to explore further the uniqueness of infinitesimally invariant measures even if $(T^{\mu}_t)_{t>0}$ is transient, we consider the following more restrictive condition than {\bf (S1)}. \\
\text{}\\
{\bf (S2)}: {\it {\bf (S1)} holds with $\overline{\mathbf{B}}=0$, i.e.
\begin{eqnarray*}
Lf &:=& \frac{1}{2\rho}\text{\rm div}\left(\rho (A+C) \nabla f\right)=
\frac12 \text{\rm trace}(A \nabla^2 f) +\langle \beta^{\rho, A+C^T}, \nabla f  \rangle, \quad f \in C_0^{\infty}(\R^d), \\
L'^{, \mu}f &:=& \frac{1}{2\rho}\text{\rm div}\left(\rho (A+C^T) \nabla f\right)=\frac12 \text{\rm trace}(A \nabla^2 f) +\langle \beta^{\rho, A+C}, \nabla f  \rangle, \quad f \in C_0^{\infty}(\R^d).
\end{eqnarray*}
Furthermore, there exist constants $\theta, M>0$ such that
$$
\theta \|\xi\|^2 \leq \langle \rho(x) A(x) \xi, \xi   \rangle \leq M \|\xi\|^2 \;\; \text{ and } \;\; \max_{1 \leq i,j \leq d} | \rho (x)c_{ij}(x)| \leq M, \quad \forall x, \xi \in \R^d.
$$ 
}

\begin{theo} \label{uniqueinfinitesim}
Assume {\bf (S2)} holds. Then $\mu$ is the unique infinitesimally invariant measure for $(L, C_0^{\infty}(\R^d))$ and $(L'^{, \mu}, C_0^{\infty}(\R^d))$.
\end{theo}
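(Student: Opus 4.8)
The plan is to show that any (non-zero) infinitesimally invariant measure $\widetilde{\mu}$ for $(L, C_0^{\infty}(\R^d))$ has a strictly positive density $\widetilde{\rho}$ with respect to Lebesgue measure, that $u := \widetilde{\rho}/\rho$ solves a uniformly elliptic divergence-type equation with \emph{bounded} coefficients on all of $\R^d$, and then to invoke a Liouville-type theorem to force $u$ to be constant. First I would apply the elliptic regularity result for measures \cite[Corollary 2.10]{BKR2} to the identity $\int_{\R^d} Lf \, d\widetilde{\mu} = 0$, $f \in C_0^{\infty}(\R^d)$. Writing $L$ in non-divergence form, $Lf = \tfrac12 \mathrm{trace}(A \nabla^2 f) + \langle \beta^{\rho, A+C^T}, \nabla f\rangle$, condition {\bf (S2)} (which contains the local regularity of {\bf (S1)}) gives $A \in H^{1,p}_{loc}(\R^d) \cap C(\R^d)$ and $\beta^{\rho, A+C^T} \in L^p_{loc}(\R^d, \R^d)$ (the term $\tfrac{1}{2\rho}(A+C^T)\nabla \rho$ is in $L^p_{loc}$ since $1/\rho$ is continuous, and $\tfrac12 \nabla(A+C^T) \in L^p_{loc}$). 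Hence \cite[Corollary 2.10]{BKR2} yields $\widetilde{\mu} = \widetilde{\rho}\, dx$ with $\widetilde{\rho} \in H^{1,p}_{loc}(\R^d) \cap C(\R^d)$, and, since $\widetilde{\rho}$ is a nonnegative solution of a locally elliptic equation, the elliptic Harnack inequality forces $\widetilde{\rho} > 0$ everywhere (as $\widetilde{\mu} \neq 0$).

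Next I would set $u := \widetilde{\rho}/\rho$. Since $\rho \in H^{1,p}_{loc}(\R^d) \cap C(\R^d)$ is strictly positive, $u \in H^{1,p}_{loc}(\R^d) \cap C(\R^d)$ with $u > 0$. Using the divergence form $Lf = \tfrac{1}{2\rho}\mathrm{div}(\rho(A+C)\nabla f)$ and integrating by parts (via an approximation as in the proof of Theorem \ref{theo3:1}, which is legitimate because $f \in C_0^{\infty}(\R^d)$ and $u \in H^{1,p}_{loc}(\R^d)$), the identity $\int_{\R^d} Lf\, d\widetilde{\mu} = 0$ becomes
$$
\int_{\R^d} \langle \rho(A+C)\nabla f, \nabla u\rangle \, dx = 0, \qquad \forall f \in C_0^{\infty}(\R^d).
$$
Because $\langle \rho(A+C)\nabla f, \nabla u\rangle = \langle \nabla f, \rho(A-C)\nabla u\rangle$ (using $A^T = A$, $C^T = -C$), this is precisely the weak formulation of $\mathrm{div}\big(\rho(A-C)\nabla u\big) = 0$ on $\R^d$. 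The decisive point is that {\bf (S2)} makes the matrix $M := \rho(A-C)$ globally admissible for De Giorgi--Nash--Moser theory: its entries satisfy $|\rho a_{ij}| + |\rho c_{ij}| \leq 2M$, and its symmetric part is $\rho A$, so $\langle M\xi, \xi\rangle = \langle \rho A \xi, \xi\rangle \geq \theta\|\xi\|^2$ for all $\xi$; in particular \emph{no derivatives of $C$ enter}. Thus $u$ is a strictly positive global weak solution of a uniformly elliptic divergence-form operator with bounded measurable coefficients, and the Liouville-type theorem \cite[Theorem 9.11(i)]{F07}, a consequence of the scale-invariant elliptic Harnack inequality \cite[Theorem 5]{Se64}, yields that $u$ is constant, say $u \equiv c > 0$. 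Hence $\widetilde{\rho} = c\rho$, i.e. $\widetilde{\mu} = c\mu$. The assertion for $(L'^{,\mu}, C_0^{\infty}(\R^d))$ follows verbatim with $C$ replaced by $C^T$: {\bf (S2)} is invariant under this replacement since $|\rho c_{ij}^T| = |\rho c_{ji}| = |\rho c_{ij}|$, and the corresponding density quotient $u'$ then solves $\mathrm{div}(\rho(A+C)\nabla u') = 0$, to which the same Liouville theorem applies.

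The step I expect to be the main obstacle is not the Liouville argument itself but the passage to densities and the integration by parts at merely $H^{1,p}_{loc}$-regularity: verifying that \cite[Corollary 2.10]{BKR2} is applicable with drift only in $L^p_{loc}$, that $u = \widetilde{\rho}/\rho$ indeed lies in $H^{1,p}_{loc} \cap C$, and that the product/chain-rule manipulations producing the clean equation $\mathrm{div}(\rho(A-C)\nabla u) = 0$ are justified. The conceptual heart is the observation that, although $C$ feeds into the drift through $\nabla C$ in $\beta^{\rho,A+C^T}$, at the level of the density equation it recombines into the \emph{bounded} divergence-form coefficient $\rho(A-C)$, so that the boundedness of $\rho c_{ij}$ in {\bf (S2)} --- with no hypothesis whatsoever on $\nabla C$ --- is exactly what drives the uniqueness, consistently with the leitmotif of the paper.
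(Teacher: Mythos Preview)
Your proposal is correct and follows essentially the same route as the paper: obtain a strictly positive $H^{1,p}_{loc}\cap C$ density $\widetilde{\rho}$ for $\widetilde{\mu}$ via \cite[Corollaries 2.10, 2.11]{BKR2}, set $u=\widetilde{\rho}/\rho$, integrate by parts to get the weak equation $\int_{\R^d}\langle \rho(A+C^T)\nabla u,\nabla\varphi\rangle\,dx=0$ (your $\rho(A-C)$ is the same matrix since $C^T=-C$), and conclude $u\equiv\text{const}$ by the Liouville theorem \cite[Theorem 9.11(i)]{F07}. Your additional remarks on why the coefficient matrix $\rho(A-C)$ meets the hypotheses of that Liouville theorem, and why no condition on $\nabla C$ is needed, are accurate elaborations of steps the paper leaves implicit.
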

\begin{proof}
Let $\widetilde{\mu}$ be an infinitesimally invariant measure for $(L, C_0^{\infty}(\R^d))$. Then by \cite[Corollary 2.10, 2.11]{BKR2}, there exists $\widetilde{\rho} \in H^{1,p}_{loc}(\R^d) \cap C(\R^d)$ such that $\widetilde{\rho}(x)>0$ for all $x \in \R^d$ and $\widetilde{\mu}= \widetilde{\rho} dx$. Let  $u:= \frac{\widetilde{\rho}}{\rho}$. Then $u \in H^{1,p}_{loc}(\R^d) \cap C(\R^d)$ with $u(x)>0$ for all $x \in \R^d$ and
$$
\int_{\R^d} \frac{1}{2}\text{\rm div}\left(\rho (A+C) \nabla \varphi \right) \cdot u \,dx= \int_{\R^d} \rho L\varphi \cdot u \,dx  =  \int_{\R^d} L \varphi\, d\widetilde{\mu} =0, \quad \forall \varphi \in C_0^{\infty}(\R^d).
$$
Using integration by parts, we get
$$
\int_{\R^d} \langle \frac12 \rho (A+C^T) \nabla u, \nabla \varphi \rangle dx =0, \quad \forall \varphi \in C_0^{\infty}(\R^d).
$$
By the Liouville-type theorem \cite[Theorem 9.11 (i)]{F07}, $u$ is constant. This shows the uniqueness of infinitesimally invariant measures for $(L, C_0^{\infty}(\R^d))$. By replacing $C^T$ with $C$ and using the same argument as above, $\mu$ is the unique infinitesimally invariant measure for $(L'^{, \mu}, C_0^{\infty}(\R^d))$.
\end{proof}

\begin{rem}
Assume that {\bf (S2)} holds and let $r \in [2, \infty]$.  Let $(G^{\mu}_{\alpha})_{\alpha>0}$ and $(G'^{, \mu}_{\alpha})_{\alpha>0}$
be the resolvent on $L^r(\R^d, \mu)$
associated with $(T^{\mu}_{t})_{t>0}$ and $(T'^{, \mu}_t)_{t>0}$, respectively (see Definition \ref{conserdef}(vi)). Then we obtain the following result:
\begin{equation} \label{liouviltype}
\text{ If \,$f \in L^r(\R^d, \mu)$ satisfies \ $\alpha G^{\mu}_{\alpha} f=f$ \,for some $\alpha>0$, then $f$ is constant. }
\end{equation}
Indeed, 
let $\alpha>0$ and $g \in L^1(\R^d, \mu)_b$. Then, $G^{\mu}_{\alpha}g \in H^{1,2}_{loc}(\R^d)$ and it follows from \cite[Remark 1.7(ii)]{St99} that for any $\varphi \in C_0^{\infty}(\R^d)$
\begin{eqnarray}
\int_{\R^d} (\rho g)\cdot \varphi dx&=& \int_{\R^d} g \varphi d\mu= \int_{\R^d} g \cdot G'^{, \mu}_{\alpha} (\alpha-L'^{, \mu}) \varphi d\mu =\int_{\R^d} G^{\mu}_{\alpha}g \cdot (\alpha-L'^{, \mu}) \varphi d\mu \nonumber \\
&=&\int_{\R^d} G^{\mu}_{\alpha}g \left(-\frac12 \text{\rm div}\left(\rho (A+C^T) \nabla \varphi \right) \right) dx+\int_{\R^d} \alpha \rho G^{\mu}_{\alpha} g \cdot \varphi dx \nonumber  \\
&=&  \frac12 \int_{\R^d} \Big \langle \rho (A+C) \nabla G_{\alpha}g, \nabla \varphi  \Big \rangle dx+\int_{\R^d} \alpha \rho G^{\mu}_{\alpha} g \cdot \varphi dx. \label{inteidenti}
\end{eqnarray}
By the energy estimate (\cite[Theorem 1.7.4]{BKRS}), for any open balls\, $U_1$ and $U_2$ with $\overline{U}_1 \subset U_2$ there exists a constant $C_1>0$ independent of $g$ such that
\begin{equation} \label{sobolevest}
\|G^{\mu}_{\alpha} g\|_{H^{1,2}(U_1)} \leq C_1 (\|G^{\mu}_{\alpha}g\|_{L^2(U_2)}+\|g\|_{L^2(U_2)}).
\end{equation}
Let $f \in L^r(\R^d, \mu)$ for some $r \in [2, \infty]$. Then there exists a sequence of functions $(f_n)_{n \geq 1} \subset L^1(\R^d, \mu)_b$ such that
$$
\lim_{n \rightarrow \infty}f_n =f, \quad \lim_{n \rightarrow \infty}G^{\mu}_{\alpha}f_n=G^{\mu}_{\alpha}f, \quad \text{$\mu$-a.e.}
$$
Thus, applying the Cauchy sequence argument with Lebesgue's theorem to \eqref{sobolevest}, we have $G_{\alpha}f \in H^{1,2}(U_1)$ and
\begin{equation} \label{approxim}
\lim_{n \rightarrow \infty} G^{\mu}_{\alpha}f_n=G^{\mu}_{\alpha}f \quad \text{ in $H^{1,2}(U_1)$}.
\end{equation}
Since $U_1$ is an arbitrarily chosen open ball, we get $G^{\mu}_{\alpha}f \in H^{1,2}_{loc}(\R^d)$ and it follows from \eqref{approxim} and \eqref{inteidenti} that
\begin{equation} \label{intiden}
\frac12 \int_{\R^d} \Big \langle \rho (A+C) \nabla G_{\alpha}f, \nabla \varphi  \Big \rangle dx+\int_{\R^d} \alpha \rho G^{\mu}_{\alpha} f \cdot \varphi dx =\int_{\R^d} (\rho f)\cdot \varphi dx, \quad \text{ for all $\varphi \in C_0^{\infty}(\R^d)$}.
\end{equation}
Now, assume that $f \in L^r(\R^d, \mu)$ for some $r \in [2, \infty]$ and that $\alpha G^{\mu}_{\alpha} f=f$.
Then, \eqref{intiden} implies that
$$
\frac12 \int_{\R^d} \Big \langle \rho (A+C) \nabla G_{\alpha}f, \nabla \varphi  \Big \rangle dx=0, \quad \text{ for all $\varphi \in C_0^{\infty}(\R^d)$}.
$$
Thus, by the Liouville-type theorem \cite[Theorem 9.11 (i)]{F07}, $G^{\mu}_{\alpha}f$ is constant, hence \eqref{liouviltype} is shown.
Analogously, we obtain the following result:
\begin{equation} \label{liouviltype2}
\text{ If \,$f \in L^r(\R^d, \mu)$ \,satisfies \ $\alpha G'^{, \mu}_{\alpha} f=f$\, for some $\alpha>0$, then $f$ is constant. }
\end{equation}
We refer to \cite{BCR} where various properties for the functions $f$ satisfying that $\alpha G^{\mu}_{\alpha}f=f$ (or $\alpha G'^{, \mu}_{\alpha}f=f$), $\alpha>0$ are systemically explored.
\end{rem}

\begin{lem} \label{rectranap}
Let $\widetilde{A}:=(\widetilde{a}_{ij})_{1 \leq i,j \leq d}$ be a symmetric matrix of functions that is locally uniformly strictly elliptic, with $\widetilde{a}_{ij} \in H^{1,p}_{loc}(\R^d) \cap C(\R^d)$ for all $1 \leq i,j \leq d$, where $p \in (d, \infty)$. Let $\widetilde{\rho}_1 \in H^{1,p}_{loc}(\R^d) \cap C(\R^d)$ with $\widetilde{\rho}_1(x)>0$ for all $x\in \R^d$ and $\widetilde{\mu}_1:=\widetilde{\rho}_1 dx$. Let $(\widetilde{\mathcal{E}}^1, D(\widetilde{\mathcal{E}}^1))$ be the symmetric Dirichlet form defined as the closure of 
$$
\widetilde{\mathcal{E}}^1(f,g):=\frac12 \int_{\R^d} \langle \widetilde{A} \nabla f, \nabla g \rangle d\widetilde{\mu}_1, \quad f, g \in C_0^{\infty}(\R^d)
$$
on $L^2(\R^d, \widetilde{\mu}_1)$ and let $(\widetilde{T}^1_t)_{t>0}$ be the corresponding sub-Markovian $C_0$-semigroup of contractions on $L^2(\R^d, \widetilde{\mu}_1)$. Let $\phi \in H^{1,p}_{loc}(\R^d) \cap C(\R^d) \cap \mathcal{B}_b(\R^d)$ with $\phi(x)>0$ for all $x \in \R^d$ and $\widetilde{\mu}_2:=\phi \widetilde{\rho}_1 dx$. Let $(\widetilde{\mathcal{E}}^2, D(\widetilde{\mathcal{E}}^2))$ be the symmetric Dirichlet form defined as the closure of 
$$
\widetilde{\mathcal{E}}^2(f,g):=\frac12 \int_{\R^d} \langle \frac{1}{\phi} \widetilde{A} \nabla f, \nabla g \rangle  d\widetilde{\mu}_2=\widetilde{\mathcal{E}}^1(f,g), \quad f, g \in C_0^{\infty}(\R^d)
$$
on $L^2(\R^d, \widetilde{\mu}_2)$ and let $(\widetilde{T}^2_t)_{t>0}$ be the corresponding sub-Markovian $C_0$-semigroup of contractions on $L^2(\R^d, \widetilde{\mu}_2)$. Then the following properties {\it (i)-(iii)} are satisfied.
\begin{itemize}
\item[(i)]
If $(\widetilde{T}^1_t)_{t>0}$ is recurrent, then $(\widetilde{T}^2_t)_{t>0}$ is recurrent.
\item[(ii)]
If $(\widetilde{T}^1_t)_{t>0}$ is transient, then $(\widetilde{T}^2_t)_{t>0}$ is transient.
\item[(iii)]
 $(\widetilde{T}^1_t)_{t>0}$ is recurrent (resp, transient) if and only if $(\widetilde{T}^2_t)_{t>0}$ is recurrent (resp, transient).
\end{itemize}
\end{lem}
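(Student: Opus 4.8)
The plan is to reduce all three assertions to a single fact: $(\widetilde{\mathcal{E}}^1,D(\widetilde{\mathcal{E}}^1))$ and $(\widetilde{\mathcal{E}}^2,D(\widetilde{\mathcal{E}}^2))$ share the \emph{same extended Dirichlet space}, on which the two forms coincide. Since $\widetilde\rho_1$ and $\phi\widetilde\rho_1$ are continuous and strictly positive, the measures $dx$, $\widetilde\mu_1$ and $\widetilde\mu_2$ are pairwise mutually absolutely continuous; hence ``$\widetilde\mu_1$-a.e.'' and ``$\widetilde\mu_2$-a.e.'' describe the same notion, $C_0^\infty(\R^d)$ is a common core for both forms (each being defined as a closure of its restriction to $C_0^\infty(\R^d)$), and $\widetilde{\mathcal{E}}^1(f,g)=\widetilde{\mathcal{E}}^2(f,g)$ for all $f,g\in C_0^\infty(\R^d)$ by the definition of $\widetilde{\mathcal{E}}^2$.

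The key step is to prove $D(\widetilde{\mathcal{E}}^1)_e=D(\widetilde{\mathcal{E}}^2)_e$ with $\widetilde{\mathcal{E}}^1=\widetilde{\mathcal{E}}^2$ on it. Let $u\in D(\widetilde{\mathcal{E}}^1)_e$, witnessed by an $\widetilde{\mathcal{E}}^1$-Cauchy sequence $(u_n)\subset D(\widetilde{\mathcal{E}}^1)$ with $u_n\to u$ $\widetilde\mu_1$-a.e. As $C_0^\infty(\R^d)$ is dense in $D(\widetilde{\mathcal{E}}^1)$ for the norm $v\mapsto\big(\widetilde{\mathcal{E}}^1(v,v)+\|v\|_{L^2(\R^d,\widetilde\mu_1)}^2\big)^{1/2}$, I would choose $v_n\in C_0^\infty(\R^d)$ with $\widetilde{\mathcal{E}}^1(v_n-u_n,v_n-u_n)+\|v_n-u_n\|_{L^2(\R^d,\widetilde\mu_1)}^2\le n^{-2}$. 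Then $(v_n)$ is $\widetilde{\mathcal{E}}^1$-Cauchy, hence $\widetilde{\mathcal{E}}^2$-Cauchy, and $v_n-u_n\to0$ in $L^2(\R^d,\widetilde\mu_1)$; passing to a subsequence (not relabelled) along which also $v_n-u_n\to0$ $\widetilde\mu_1$-a.e., and using $u_n\to u$ $\widetilde\mu_1$-a.e., we get $v_n\to u$ both $\widetilde\mu_1$- and $\widetilde\mu_2$-a.e. Thus $(v_n)\subset C_0^\infty(\R^d)$ is an approximating sequence for $u$ in the extended Dirichlet space of \emph{both} $\widetilde{\mathcal{E}}^1$ and $\widetilde{\mathcal{E}}^2$, so $u\in D(\widetilde{\mathcal{E}}^2)_e$ and $\widetilde{\mathcal{E}}^2(u,u)=\lim_n\widetilde{\mathcal{E}}^2(v_n,v_n)=\lim_n\widetilde{\mathcal{E}}^1(v_n,v_n)=\widetilde{\mathcal{E}}^1(u,u)$. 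By symmetry the reverse inclusion holds, so the two extended Dirichlet spaces — as spaces of a.e.-equivalence classes — and the forms on them agree.

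Finally I would invoke the standard characterizations of recurrence and transience of a symmetric Dirichlet form through its extended Dirichlet space (\cite[Sections~1.5 and 1.6]{FOT}): $(\widetilde{T}^i_t)_{t>0}$ is transient iff $\big(D(\widetilde{\mathcal{E}}^i)_e,\widetilde{\mathcal{E}}^i\big)$ is complete (a Hilbert space), and recurrent iff $1\in D(\widetilde{\mathcal{E}}^i)_e$ with $\widetilde{\mathcal{E}}^i(1,1)=0$. By the previous step both properties depend only on the common pair $\big(D(\widetilde{\mathcal{E}}^1)_e,\widetilde{\mathcal{E}}^1\big)=\big(D(\widetilde{\mathcal{E}}^2)_e,\widetilde{\mathcal{E}}^2\big)$, whence $(\widetilde{T}^1_t)_{t>0}$ is recurrent (resp.\ transient) if and only if $(\widetilde{T}^2_t)_{t>0}$ is; this is (iii), and (i), (ii) are its forward directions.

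The delicate point is the key step: turning an approximating sequence that witnesses membership in \emph{one} extended Dirichlet space into one lying in the common core $C_0^\infty(\R^d)$ that still converges \emph{almost everywhere} — not merely in $L^2$ — with respect to the \emph{other} reference measure; this is exactly where the common core together with the mutual absolute continuity of $\widetilde\mu_1$ and $\widetilde\mu_2$ enter, and it forces the passage to a subsequence. (Boundedness of $\phi$ is not needed for this argument, only for $(\widetilde{\mathcal{E}}^2,C_0^\infty(\R^d))$ to be of the admissible type on $L^2(\R^d,\widetilde\mu_2)$; a more computational alternative — which does use $\phi\in\mathcal{B}_b(\R^d)$ — is to transport a recurrence sequence $u_n\in C_0^\infty(\R^d)$ with $u_n\to1$ a.e.\ and $\widetilde{\mathcal{E}}^1(u_n,u_n)\to0$ unchanged, and to transport a transience reference function via $g_2:=g_1/\|\phi\|_{L^\infty(\R^d)}$ and $g_1:=\phi\,g_2$, checking positivity, integrability, and the defining inequality in each direction.)
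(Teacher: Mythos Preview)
Your proof is correct and takes a genuinely different route from the paper. The paper proves (i) and (ii) separately by the direct computational method you sketch in your final parenthetical: for (i) it uses the boundedness of $\phi$ to show $\|u\|_{L^2(\widetilde\mu_2)}\le\|\phi\|_{L^\infty}^{1/2}\|u\|_{L^2(\widetilde\mu_1)}$, whence $D(\widetilde{\mathcal E}^1)\subset D(\widetilde{\mathcal E}^2)$, and then transports a recurrence sequence from \cite[Theorem~1.6.3]{FOT} unchanged; for (ii) it takes a transience reference function $g_1$ from \cite[Theorem~1.5.1]{FOT} and sets $g_2:=g_1/\|\phi\|_{L^\infty}$. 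Part (iii) is then deduced from (i), (ii) together with the recurrence/transience dichotomy (Proposition~\ref{prop2.2}(v) and \cite[Proposition~3.11]{LT21in}). Your argument instead identifies the two extended Dirichlet spaces and the forms on them via the common core $C_0^\infty(\R^d)$ and the mutual absolute continuity of $\widetilde\mu_1,\widetilde\mu_2$; this gives (iii) directly, with (i) and (ii) as consequences, and --- as you note --- does not rely on the bound on $\phi$ for the main implication. The paper's approach is shorter and more elementary (no extended-space machinery, just the two FOT criteria), but yours is more conceptual and makes the underlying reason for the equivalence transparent: the forms literally coincide on a space that determines recurrence and transience. One small caution: the ``transient iff $(\mathcal F_e,\mathcal E)$ is a Hilbert space'' direction you invoke is stated in \cite{FOT} only as a forward implication; to get the full ``iff'' cleanly you should either appeal to the dichotomy (which does hold here) or, more simply, argue transience and recurrence each via the corresponding extended-space criterion that \emph{is} an equivalence in \cite{FOT}.
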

\begin{proof}
(i) Assume $(\widetilde{T}^1_t)_{t>0}$ is recurrent. Then by \cite[Theorem 1.6.3]{FOT}, there exists a sequence of functions $(u_n)_{n \geq 1} \subset D(\widetilde{\mathcal{E}}^1)$ satisfying $\lim_{n \rightarrow \infty} u_n=1$, $\widetilde{\mu}_1$-a.e. and $\lim_{n \rightarrow \infty} \widetilde{\mathcal{E}}^1(u_n, u_n)=0$. Observe that if $u \in L^2(\R^d, \widetilde{\mu}_1)$, then $u \in L^2(\R^d, \widetilde{\mu}_2)$ and
\begin{equation*}
\|u\|_{L^2(\R^d, \widetilde{\mu}_2)} \leq \|\phi\|^{1/2}_{L^{\infty}(\R^d)} \|u\|_{L^2(\R^d, \widetilde{\mu}_1)},
\end{equation*}
hence $D(\widetilde{\mathcal{E}}^1) \subset D(\widetilde{\mathcal{E}}^2)$. Thus, $(u_n)_{n \geq 1} \subset D(\widetilde{\mathcal{E}}^2)$, $\lim_{n \rightarrow \infty}u_n =1$, $\widetilde{\mu}_2$-a.e. and $\lim_{n \rightarrow \infty}\widetilde{\mathcal{E}}^2(u_n, u_n)=\lim_{n \rightarrow \infty} \widetilde{\mathcal{E}}^1(u_n, u_n)=0$. Therefore, by  \cite[Theorem 1.6.3]{FOT} $(\widetilde{T}^2_t)_{t>0}$ is recurrent. \\
(ii) Assume $(\widetilde{T}^1_t)_{t>0}$ is transient. Then by  \cite[Theorem 1.5.1]{FOT}, there exists a $\widetilde{\mu}_1$-a.e. strictly positive function $g_1 \in L^1(\R^d, \widetilde{\mu}_1)_b$ such that
$$
\int_{\R^d} |u| g_1 d\widetilde{\mu}_1 \leq \sqrt{\widetilde{\mathcal{E}}^1(u,u)}, \quad \forall u \in C_0^{\infty}(\R^d).
$$
Let $g_2:=g_1 \|\phi\|_{L^{\infty}(\R^d)}^{-1}$. Then $g_2 \in L^1(\R^d, \widetilde{\mu}_2)_b$ is $\widetilde{\mu}_2$-a.e. strictly positive and it follows
\begin{eqnarray*}
\int_{\R^d} |u| g_2 d\widetilde{\mu}_2 \leq \int_{\R^d} |u| g_1 d\widetilde{\mu}_1  \leq \sqrt{\widetilde{\mathcal{E}}^1(u,u)} = \sqrt{\widetilde{\mathcal{E}}^2(u,u)}, \quad \forall u \in C_0^{\infty}(\R^d).
\end{eqnarray*}
Since $g_2 \in L^2(\R^d, \widetilde{\mu}_2)$, it follows by approximation that
\begin{eqnarray*}
\int_{\R^d} |u| g_2 d\widetilde{\mu}_2 \leq  \sqrt{\widetilde{\mathcal{E}}^2(u,u)}, \quad \forall u \in D(\widetilde{\mathcal{E}}^2).
\end{eqnarray*}
By  \cite[Theorem 1.5.1]{FOT}, $(\widetilde{T}^2_t)_{t>0}$ is transient. \\
(iii) By \cite[Proposition 3.11]{LT21in} and Proposition \ref{prop2.2}(v), $(\widetilde{T}^1_t)_{t>0}$ is either recurrent or transient, and $(\widetilde{T}^2_t)_{t>0}$ is either recurrent or transient. Hence, the assertion follows from (i) and (ii). 
\end{proof}

\begin{theo} \label{rectrancriter}
Assume that {\bf (S2)} holds and that either 
\begin{equation} \label{rhorinfty}
\rho \in L^{\infty}(\R^d) \,\text{ or }\, \frac{1}{\rho} \in L^{\infty}(\R^d).
\end{equation}
Let  $(\mathcal{E}^{0, \mu}, D(\mathcal{E}^{0, \mu}))$ be the symmetric Dirichlet form defined as the closure of
$$
\mathcal{E}^{0, \mu}(f,g)=\frac12 \int_{\R^d} \langle A \nabla f, \nabla g \rangle d\mu, \quad f, g \in C_0^{\infty}(\R^d)
$$
in $L^2(\R^d, \mu)$ and let $(T^{0, \mu}_t)_{t>0}$ be the corresponding sub-Markovian $C_0$-semigroup of contractions on $L^2(\R^d, \mu)$. Then the following holds:
\begin{itemize}
\item[(i)]
$(T^{0, \mu}_t)_{t>0}$ and $(T^{\mu}_t)_{t>0}$ are recurrent, if $d=2$.
\item[(ii)]
$(T^{0, \mu}_t)_{t>0}$ and $(T^{\mu}_t)_{t>0}$ are transient, if $d \geq 3$. 
\item[(iii)]
If $d \geq 3$ and $\rho \in L^1(\R^d)_b$, then $(T^{\mu}_t)_{t>0}$ is non-conservative and there does not exist an invariant measure for $(T^{\mu}_t)_{t>0}$.
\end{itemize}
\end{theo}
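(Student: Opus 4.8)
The plan is: (a) pin down recurrence/transience of the symmetric semigroup $(T^{0,\mu}_t)_{t>0}$; (b) transfer the conclusion to $(T^{\mu}_t)_{t>0}$ via the sector condition hidden in {\bf (S2)}; (c) deduce (iii) from (ii) together with Proposition \ref{prop2.2}(iv) and Theorem \ref{uniqueinfinitesim}. For (a) I would write $\mathcal{E}^{0,\mu}(f,g)=\frac12\int_{\R^d}\langle B\nabla f,\nabla g\rangle\,dx$ with $B:=\rho A$, and note that by {\bf (S2)} the matrix $B$ is symmetric, has entries in $H^{1,p}_{loc}(\R^d)\cap C(\R^d)$ (a product of two such functions, $p>d$), and satisfies $\theta\|\xi\|^2\le\langle B(x)\xi,\xi\rangle\le M\|\xi\|^2$ for all $x,\xi\in\R^d$. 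Then I apply Lemma \ref{rectranap}, choosing the data according to the alternative \eqref{rhorinfty}: if $\rho\in L^\infty(\R^d)$, take $\widetilde{A}=B$, $\widetilde{\rho}_1\equiv 1$ and $\phi=\rho\in H^{1,p}_{loc}(\R^d)\cap C(\R^d)\cap\mathcal{B}_b(\R^d)$ (strictly positive), so that $\widetilde{\mathcal{E}}^2=\mathcal{E}^{0,\mu}$, i.e. $(\widetilde{T}^2_t)_{t>0}=(T^{0,\mu}_t)_{t>0}$, while $(\widetilde{T}^1_t)_{t>0}$ is the semigroup of $\widehat{\mathcal{E}}(f,g):=\frac12\int\langle B\nabla f,\nabla g\rangle\,dx$ on $L^2(\R^d,dx)$; if $\frac1\rho\in L^\infty(\R^d)$, take $\widetilde{A}=A$, $\widetilde{\rho}_1=\rho$ (so $(\widetilde{T}^1_t)_{t>0}=(T^{0,\mu}_t)_{t>0}$) and $\phi=\frac1\rho\in H^{1,p}_{loc}(\R^d)\cap C(\R^d)\cap\mathcal{B}_b(\R^d)$ (strictly positive, using continuity and positivity of $\rho$), so that $\widetilde{\mu}_2=dx$ and $(\widetilde{T}^2_t)_{t>0}$ is the semigroup of $\widehat{\mathcal{E}}$ on $L^2(\R^d,dx)$. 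In both cases Lemma \ref{rectranap}(iii) reduces the problem to the recurrence/transience of the semigroup of $\widehat{\mathcal{E}}$ on $L^2(\R^d,dx)$; since $D(\widehat{\mathcal{E}})=H^{1,2}(\R^d)$ and $\frac\theta2\int\|\nabla u\|^2dx\le\widehat{\mathcal{E}}(u,u)\le\frac M2\int\|\nabla u\|^2dx$, the criteria \cite[Theorems 1.5.1, 1.6.3]{FOT} (which only see the domain and the quadratic form) show that this semigroup behaves, for recurrence/transience, exactly like $d$-dimensional Brownian motion (the form $\frac12\int\|\nabla u\|^2dx$), i.e. recurrent for $d\le 2$ and transient for $d\ge 3$ (see e.g. \cite[Example 1.5.3]{FOT}). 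As $d\ge 2$ by {\bf (S1)}, this proves (i) and (ii) for $(T^{0,\mu}_t)_{t>0}$.

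For (b) I would record that under {\bf (S2)} the form $\mathcal{E}(f,g):=\frac12\int_{\R^d}\langle(A+C)\nabla f,\nabla g\rangle\,d\mu=-\int_{\R^d}Lf\cdot g\,d\mu$, $f,g\in C_0^\infty(\R^d)$, satisfies the strong sector condition relative to its symmetric part $\mathcal{E}^{0,\mu}$: writing $\frac12\int\langle C\nabla f,\nabla g\rangle d\mu=\frac12\int\langle(\rho C)\nabla f,\nabla g\rangle dx$ and using \eqref{diffestima} together with $\max_{i,j}|\rho c_{ij}|\le M$ and $\langle(\rho A)\xi,\xi\rangle\ge\theta\|\xi\|^2$ (so that $\int\|\nabla u\|^2dx\le\frac2\theta\mathcal{E}^{0,\mu}(u,u)$), one obtains $|\mathcal{E}(f,g)-\mathcal{E}^{0,\mu}(f,g)|\le\frac{dM}\theta\,\mathcal{E}^{0,\mu}(f,f)^{1/2}\mathcal{E}^{0,\mu}(g,g)^{1/2}$. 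Hence the closure $(\mathcal{E},D(\mathcal{E}))$ is a sectorial Dirichlet form with $D(\mathcal{E})=D(\mathcal{E}^{0,\mu})$ and $\mathcal{E}(u,u)=\mathcal{E}^{0,\mu}(u,u)$ for every $u\in D(\mathcal{E})$, and $(T^{\mu}_t)_{t>0}$ on $L^2(\R^d,\mu)$ is the semigroup associated with $(\mathcal{E},D(\mathcal{E}))$, as follows from $L^2(\R^d,\mu)$-uniqueness of $(L,C_0^\infty(\R^d))$ under {\bf (S2)} within the framework of \cite{LT21in}. Since recurrence and transience of such a form are governed solely by its domain and the quadratic form $u\mapsto\mathcal{E}(u,u)$, which here equals $\mathcal{E}^{0,\mu}$, it follows (cf. \cite{LT21in}) that $(T^{\mu}_t)_{t>0}$ is recurrent (resp. transient) if and only if $(T^{0,\mu}_t)_{t>0}$ is; together with (a) this proves (i) and (ii) for $(T^{\mu}_t)_{t>0}$.

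For (c): if $d\ge 3$ and $\rho\in L^1(\R^d)_b$, then $\mu=\rho\,dx$ is finite and, by (ii), $(T^{\mu}_t)_{t>0}$ is transient, hence not recurrent; by Proposition \ref{prop2.2}(iv) (valid since $\mu$ is finite), $(T^{\mu}_t)_{t>0}$ is therefore non-conservative and $\mu$ is not an invariant measure for it. If $\widetilde{\mu}$ were any invariant measure for $(T^{\mu}_t)_{t>0}$, then $\widetilde{\mu}$ would be an infinitesimally invariant measure for $(L,C_0^\infty(\R^d))$ (by \cite{LT21in}), so Theorem \ref{uniqueinfinitesim} (recall {\bf (S2)} holds) would force $\widetilde{\mu}=c\mu$ for some $c>0$, whence $\mu=c^{-1}\widetilde{\mu}$ would be invariant for $(T^{\mu}_t)_{t>0}$, a contradiction; hence $(T^{\mu}_t)_{t>0}$ admits no invariant measure. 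The step I expect to be the main obstacle is (b): identifying $(T^{\mu}_t)_{t>0}$ with the semigroup of the sectorial Dirichlet form $(\mathcal{E},D(\mathcal{E}))$ — which rests on an $L^2$-uniqueness statement for $(L,C_0^\infty(\R^d))$ under {\bf (S2)} — and invoking that recurrence/transience of such a form is inherited from its symmetric part; the remaining ingredients (Lemma \ref{rectranap}, the Brownian dichotomy, Proposition \ref{prop2.2}(iv), Theorem \ref{uniqueinfinitesim}) are routine.
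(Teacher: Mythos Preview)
Your proposal is correct and follows essentially the same route as the paper: reduce the symmetric case to a uniformly elliptic form on $L^2(\R^d,dx)$ via Lemma~\ref{rectranap} (splitting into the two cases of \eqref{rhorinfty} exactly as you do), invoke the Brownian dichotomy, then transfer to $(T^{\mu}_t)_{t>0}$ through the strong sector condition, and finally derive (iii) from (ii), Proposition~\ref{prop2.2}(iv), \cite[Lemma 3.16]{LT21in} and Theorem~\ref{uniqueinfinitesim}. The only differences are the precise references used at the two spots you flagged as delicate: the paper identifies $(T^{\mu}_t)_{t>0}$ with the semigroup of $(\mathcal{E},D(\mathcal{E}))$ by quoting \cite[Remark 3.12]{LT21in} directly (not via an $L^2$-uniqueness argument), and it obtains the transfer of recurrence/transience from the symmetric part by \cite[Remark 5(b)]{GT2} together with Proposition~\ref{prop2.2}(v); for the dichotomy on $L^2(\R^d,dx)$ it cites \cite[Corollary 2.9]{Stu2} and \cite[Corollary 4.8(i)]{LT18} rather than \cite{FOT}.
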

\begin{proof}
For the proof of (i)--(iii), let $\widehat{A}:=\rho A$ and $(\mathcal{B}^0, D(\mathcal{B}^0))$ be the symmetric Dirichlet form defined as the closure of
$$
\mathcal{B}^0(f,g):=\frac12 \int_{\R^d} \langle \widehat{A} \nabla f, \nabla g \rangle dx, \quad f,g \in C_0^{\infty}(\R^d)
$$
in $L^2(\R^d, dx)$ and denote by $(S^0_t)_{t>0}$ the associated sub-Markovian $C_0$-semigroup of contractions on $L^2(\R^d, dx)$. 
Let $d(\cdot, \cdot)$ be an intrinsic metric on $\R^d$ defined by \cite[(1.1)]{Stu2}. Then by \cite[Lemma 2.2]{ST17} there exists a constant $\lambda \in [1, \infty)$ such that
\begin{equation} \label{intequime}
\frac{1}{\sqrt{\lambda}}\|x-y\| \leq d(x,y) \leq \sqrt{\lambda} \|x-y\|, \quad \text{for any $x,y \in \R^d$}.
\end{equation}
(i)
Since $d=2$, 
$$
\int_{1}^{\infty} \frac{r}{dx(B_r)} dr= \frac{1}{dx(B_1)}\int_{1}^{\infty} \frac{r}{r^2} dr=\infty.
$$
Hence, by using \eqref{intequime}, \cite[Corollary 2.9]{Stu2} and \cite[Corollary 4.8(i)]{LT18}, $(S^0_t)_{t>0}$ is recurrent. Thus, by Lemma \ref{rectranap}(iii), $(T^{0, \mu}_t)_{t>0}$ is recurrent. Indeed, if $\rho \in L^{\infty}(\R^d)$, then $\widetilde{A}$, $\widetilde{\rho}_1$, $\phi$, $\widetilde{\mu}_1$, $\widetilde{\mu}_2$ as in Lemma \ref{rectranap} are regarded as $\widehat{A}$, $1$, $\rho$, $dx$, $\mu$, respectively, so that $(\widetilde{T}^1_t)_{t>0}$, $(\widetilde{T}^2_t)_{t>0}$ as in Lemma \ref{rectranap} can be identified as $(S^0_t)_{t>0}$ and $(T^{0, \mu}_t)_{t>0}$, respectively. On the other hand, if $\frac{1}{\rho} \in L^{\infty}(\R^d)$, then $\widetilde{A}$, $\widetilde{\rho}_1$, $\phi$, $\widetilde{\mu}_1$, $\widetilde{\mu}_2$ as in Lemma \ref{rectranap} are regarded as $A$, $\rho$, $\frac{1}{\rho}$, $\mu$, $dx$, so that  $(\widetilde{T}^1_t)_{t>0}$, $(\widetilde{T}^2_t)_{t>0}$ as in Lemma \ref{rectranap} can be identified as $(T^{0, \mu}_t)_{t>0}$ and $(S^0_t)_{t>0}$, respectively.  \\
Let $(\mathcal{E}, D(\mathcal{E}))$  be the Dirichlet form defined as the closure of
$$
\mathcal{E}(f,g):= \frac12 \int_{\R^d} \langle (A+C) \nabla f, \nabla g \rangle d\mu, \quad f,g \in C_0^{\infty}(\R^d)
$$
on $L^2(\R^d, \mu)$. Since for any $f, g \in C_0^{\infty}(\R^d)$ it follows from {\bf (S2)} that
\begin{eqnarray*}
\left|\int_{\R^d} \langle C \nabla f, \nabla g \rangle d\mu \right|  &\leq& dM \left(\int_{\R^d} \|\nabla f\|^2 dx \right)^{1/2} \left(\int_{\R^d} \|\nabla g\|^2 dx \right)^{1/2}  \nonumber \\
&\leq& \frac{2dM}{\theta} \left(\frac12 \int_{\R^d} \langle A \nabla f, \nabla f \rangle d\mu \right)^{1/2} \left(\frac12 \int_{\R^d} \langle A \nabla g, \nabla g \rangle d\mu \right)^{1/2}, \label{strongsecdi}
\end{eqnarray*}
$(\mathcal{E}, D(\mathcal{E}))$ satisfies the strong sector condition defined as in \cite[Chapter I, (2.4)]{MR}. By \cite[Remark 3.12]{LT21in}, $(T^{\mu}_t)_{t>0}$ is associated with $(\mathcal{E}, D(\mathcal{E}))$. Since $(T^{0, \mu}_t)_{t>0}$ is recurrent and $(\mathcal{E}, D(\mathcal{E}))$ satisfies the strong sector condition, $(T^{\mu}_t)_{t>0}$ must be recurrent by \cite[Remark 5(b)]{GT2} and Proposition \ref{prop2.2}(v). \\ \\
(ii) Since $d \geq 3$, 
$$
\int_{1}^{\infty} \frac{r}{dx(B_r)} dr= \frac{1}{dx(B_1)}\int_{1}^{\infty} \frac{r}{r^d} dr<\infty.
$$
Hence,
$(S^0_t)_{t>0}$ is transient by \eqref{intequime}, \cite[Corollary 2.9]{Stu2} and \cite[Corollary 4.8(i)]{LT18}. Using Lemma \ref{rectranap}(iii) and the same way as in (i) above, we obtain $(T^{0, \mu}_t)_{t>0}$ is transient. Therefore, $(T^{\mu}_t)_{t>0}$ is transient by  \cite[Remark 5(b)]{GT2}. \\ \\
(iii) By Proposition \ref{prop2.2}(iv), (v) and Theorem \ref{rectrancriter}(ii) above, we obtain that $\mu$ is not an invariant measure for $(T^{\mu}_t)_{t>0}$ and that $(T^{\mu}_t)_{t>0}$ is non-conservative. Suppose that there exist an invariant measure $\widetilde{\mu}$ for $(T^{\mu}_t)_{t>0}$. Then by \cite[Lemma 3.16]{LT21in}, $\widetilde{\mu}$ is an infinitesimally invariant measure for $(L, C_0^{\infty}(\R^d))$. Thus by Theorem \ref{uniqueinfinitesim}, $\widetilde{\mu}=c \mu$ for some constant $c>0$, which is contradiction since $\mu$ is not an invariant measure for $(T^{\mu}_t)_{t>0}$.
\end{proof}

\begin{exam}
Let $\rho=e^{-\|x\|^2}$, $\mu=\rho dx$, $A=(a_{ij})_{1 \leq i,j \leq d}=e^{\|x\|^2} id$ and $C=(c_{ij})_{1\leq i,j \leq d}=0$. Then {\bf (S2)} holds and $\beta^{\rho, A+C^T}=0$. Then by Theorem \ref{rectrancriter}(i), (ii), $(T^{\mu}_t)_{t>0}$ is recurrent, if $d=2$ and $(T^{\mu}_t)_{t>0}$ is transient and non-conservative, if $d \geq 3$. In particular, by Theorem \ref{rectrancriter}(iii), if $d \geq 3$, then there exists no invariant measure for $(T^{\mu}_t)_{t>0}$. However, by Theorem \ref{uniqueinfinitesim} $\mu$ is the unique infinitesimally invariant measure for $(L, C_0^{\infty}(\R^d))$ for any $d \geq 2$.
\end{exam}

\begin{prop} \label{cor:3.8}
Assume {\bf (S1)} holds, and let $\overline{\mathbf{B}}=0$. Then any one of the following assumptions (i)--(ii) implies that {\bf (S2)} holds and that
$\mu$ is the unique invariant measure for  $(T^{\mu}_t)_{t>0}$ and $(T'^{, \mu}_t)_{t>0}$ as well as $(T^{\mu}_t)_{t>0}$ and $(T'^{, \mu}_t)_{t>0}$ are conservative. 
\begin{itemize}
\item[(i)]
There exist constants $c_1, c_2, \lambda, \Lambda>0$ such that
$$
c_1 \leq \rho(x) \leq c_2, \quad \forall x \in \R^d,
$$
$$
\lambda \|\xi\|^2 \leq \langle A(x) \xi, \xi \rangle \leq \Lambda \|\xi\|^2 \;\; \text{ and }\;\; \max_{1 \leq i,j \leq d} |c_{ij}(x)| \leq \Lambda, \quad \forall x, \xi \in \R^d.
$$

\item[(ii)]
There exist constants $c_1, c_2>0$, $\alpha \in [0,1)$ and $\delta\geq 0$ such that
$$
\frac{c_1}{(1+\|x\|)^{2\alpha}}  \leq \rho(x) \leq c_2(1+\|x\|^{\delta}), \quad \forall x \in \R^d,
$$
$A=(a_{ij})_{1 \leq i,j \leq d}=\frac{1}{\rho} \widetilde{A}$ and $C=(c_{ij})_{1 \leq i,j \leq d}=\frac{1}{\rho} \widetilde{C}$, where
$\widetilde{A}=(\widetilde{a}_{ij})_{1 \leq i,j \leq d}$ is a symmetric matrix of functions with $\widetilde{a}_{ij} \in H^{1,p}_{loc}(\R^d) \cap C(\R^d)$ for all $1 \leq i,j \leq d$, for some $p>d$, such that for some constants $\lambda, \Lambda>0$
$$
\lambda \|\xi\|^2 \leq \langle \widetilde{A}(x) \xi, \xi \rangle \leq \Lambda \|\xi\|^2, \quad \forall x, \xi \in \R^d,
$$
and $\widetilde{C}=(\widetilde{c}_{ij})_{1 \leq i,j \leq d}$ is an anti-symmetric matrix of functions with $\widetilde{c}_{ij} \in H^{1,p}_{loc}(\R^d) \cap C(\R^d)$ such that
$$
\max_{1 \leq i,j \leq d} |\widetilde{c}_{ij}(x)| \leq \Lambda, \quad \forall  x\in \R^d.
$$
\end{itemize}

\end{prop}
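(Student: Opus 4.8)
The plan is to assemble three results already established in the paper: the growth-based conservativeness criterion of Theorem~\ref{theo3:1}(i), the uniqueness of infinitesimally invariant measures in Theorem~\ref{uniqueinfinitesim}, and the fact \cite[Lemma 3.16]{LT21in} that every invariant measure for $(T^{\mu}_t)_{t>0}$ is an infinitesimally invariant measure for $(L,C_0^{\infty}(\R^d))$. Concretely, I would first verify that each of (i), (ii) forces \textbf{(S2)}; then deduce from Theorem~\ref{theo3:1}(i) that $\mu$ itself is an invariant measure for $(T^{\mu}_t)_{t>0}$ and $(T'^{,\mu}_t)_{t>0}$; and finally combine Theorem~\ref{uniqueinfinitesim} with \cite[Lemma 3.16]{LT21in} to conclude that any invariant measure is a constant multiple of $\mu$.

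First, I verify \textbf{(S2)}. Since \textbf{(S1)} with $\overline{\mathbf{B}}=0$ is standing, only the global bounds $\theta\|\xi\|^2\le\langle\rho(x)A(x)\xi,\xi\rangle\le M\|\xi\|^2$ and $\max_{1\le i,j\le d}|\rho(x)c_{ij}(x)|\le M$ need to be checked. Under (i) this is immediate: $\langle\rho A\xi,\xi\rangle=\rho\langle A\xi,\xi\rangle$ has spectrum in $[c_1\lambda,c_2\Lambda]$ and $|\rho c_{ij}|\le c_2\Lambda$, so \textbf{(S2)} holds with $\theta=c_1\lambda$, $M=c_2\Lambda$. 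Under (ii) one first observes that $A=\frac{1}{\rho}\widetilde{A}$ and $C=\frac{1}{\rho}\widetilde{C}$ are legitimate \textbf{(S1)}-coefficients: since $\rho\in H^{1,p}_{loc}(\R^d)\cap C(\R^d)$ is strictly positive, $1/\rho\in H^{1,p}_{loc}(\R^d)\cap C(\R^d)$, the space $H^{1,p}_{loc}(\R^d)\cap C(\R^d)$ is stable under products for $p>d$, and $\langle A(x)\xi,\xi\rangle\ge\lambda\rho(x)^{-1}\|\xi\|^2$ together with local boundedness of $\rho$ gives the local uniform ellipticity \eqref{eq:2-1} of $A$; then $\rho A=\widetilde{A}$ and $\rho C=\widetilde{C}$ give \textbf{(S2)} with $\theta=\lambda$, $M=\Lambda$.

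Next, I show $\mu$ is an invariant measure by applying Theorem~\ref{theo3:1}(i), whose hypothesis \textbf{(S1)} is now in force and whose $\overline{\mathbf{B}}$-term vanishes. Under (i) all coefficients are globally bounded with $|A|_{\infty},|C|_{\infty}\le\Lambda$ and $\Phi_A\ge\lambda$, so $|A|_{\infty}+|C|^2_{\infty}\Phi_A^{-1}\le\Lambda+\Lambda^2/\lambda$, i.e. the coefficient bound of Theorem~\ref{theo3:1}(i) holds with exponent $\alpha=0$, while $\rho\le c_2$ gives $\mu(B_r)\le c_2|B_r|$, a constant multiple of $r^d$, hence the volume bound with $\beta=d$. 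Under (ii), $|A|_{\infty},|C|_{\infty}\le\Lambda/\rho$ and $\Phi_A\ge\lambda/\rho$, whence $|A|_{\infty}+|C|^2_{\infty}\Phi_A^{-1}\le(\Lambda+\Lambda^2/\lambda)\rho^{-1}\le(\Lambda+\Lambda^2/\lambda)c_1^{-1}(1+\|x\|)^{2\alpha}\le C_1\|x\|^{2\alpha}$ for $\|x\|\ge1$, using precisely the lower bound $\rho\ge c_1(1+\|x\|)^{-2\alpha}$; and $\rho\le c_2(1+\|x\|^{\delta})$ yields $\mu(B_r)\le c_2|B_r|+c_2\int_{B_r}\|x\|^{\delta}\,dx$, which is bounded by a constant multiple of $r^{d+\delta}$ plus a constant, hence the volume bound with $\beta=d+\delta$. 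Since $\alpha\in[0,1)$ in both cases, Theorem~\ref{theo3:1}(i) applies and gives that $(T^{\mu}_t)_{t>0}$ and $(T'^{,\mu}_t)_{t>0}$ are conservative and that $\mu$ is an invariant measure for both.

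Finally, for uniqueness: by Theorem~\ref{uniqueinfinitesim}, $\mu$ is the unique infinitesimally invariant measure for $(L,C_0^{\infty}(\R^d))$ and for $(L'^{,\mu},C_0^{\infty}(\R^d))$. If $\widetilde{\mu}$ is any invariant measure for $(T^{\mu}_t)_{t>0}$, then \cite[Lemma 3.16]{LT21in} gives that $\widetilde{\mu}$ is infinitesimally invariant for $(L,C_0^{\infty}(\R^d))$, so $\widetilde{\mu}=c\mu$ for some constant $c>0$; the same argument with $(L'^{,\mu},C_0^{\infty}(\R^d))$ handles $(T'^{,\mu}_t)_{t>0}$. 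Together with the previous step this shows $\mu$ is the unique invariant measure for both semigroups. I do not expect a deep obstacle here — the proposition is essentially an assembly of Theorem~\ref{theo3:1}(i) and Theorem~\ref{uniqueinfinitesim} — but the bookkeeping in case (ii) needs care: the exponent $2\alpha$ in the lower bound on $\rho$ is tuned exactly so that $1/\rho$ translates into the admissible coefficient growth $\|x\|^{2\alpha}$ of Theorem~\ref{theo3:1}(i), and one must not skip the verification that $A=\frac{1}{\rho}\widetilde{A}$ and $C=\frac{1}{\rho}\widetilde{C}$ are genuine \textbf{(S1)}-coefficients.
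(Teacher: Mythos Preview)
Your proposal is correct and follows essentially the same route as the paper: verify \textbf{(S2)} directly from the bounds, apply Theorem~\ref{theo3:1}(i) (with $\alpha=0$, $\beta=d$ in case (i) and the given $\alpha$, $\beta=d+\delta$ in case (ii)) to get that $\mu$ is an invariant measure, and then combine Theorem~\ref{uniqueinfinitesim} with \cite[Lemma 3.16]{LT21in} for uniqueness. Your write-up is actually slightly more careful than the paper's in case (ii), since you explicitly check that $A=\frac{1}{\rho}\widetilde{A}$ and $C=\frac{1}{\rho}\widetilde{C}$ are legitimate \textbf{(S1)}-coefficients.
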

\begin{proof}
Assume (i). Then the condition {\bf (S2)} is satisfied and by Theorem \ref{uniqueinfinitesim}, $\mu$ is the unique infinitesimally invariant measure for $(L, C_0^{\infty}(\R^d))$ and $(L'^{, \mu}, C_0^{\infty}(\R^d))$. Moreover, by Theorem \ref{theo3:1}(i), $\mu$ is an invariant measure for $(T^{\mu}_t)_{t>0}$ and $(T'^{, \mu}_t)_{t>0}$. Since by \cite[Lemma 3.16]{LT21in} any invariant measure for $(T^{\mu}_t)_{t>0}$ (resp. for $(T_t'^{, \mu})_{t>0}$ ) is an infinitesimally invariant measure for $(L, C_0^{\infty}(\R^d))$ (resp. for $(L'^{, \mu}, C_0^{\infty}(\R^d))$), the assertion follows. \\
Assume (ii). Then for some contants $c_3, c_4>0$, we get $\mu(B_r) \leq c_3 r^{d+\delta}+c_4$ for all $r>0$ and
$$
\max_{1 \leq i,j \leq d}a_{ij}(x) =\frac{1}{\rho} \max_{1 \leq i,j \leq d}\widetilde{a}_{ij}(x) \leq \frac{\Lambda}{\rho(x)} \leq c_1 \Lambda (1+\|x\|^{2\alpha}), \quad \forall x \in \R^d.
$$
Since
$$
\Phi_A(x)=\inf_{\xi \in \R^d \setminus \{0\}} \frac{\langle A(x) \xi, \xi \rangle}{\|\xi\|^2} \geq \frac{\lambda}{\rho(x)}, \; \quad \max_{1 \leq i,j \leq d} |c_{ij}(x)| \leq \frac{\Lambda}{\rho(x)}, \quad \forall x \in \R^d,
$$
it follows
$$
\left(\max_{1 \leq i,j \leq d}c_{ij}(x)\right)^2\cdot \Phi_A^{-1}(x) \leq  \frac{\Lambda^2}{\lambda \rho(x)} \leq \frac{c_1\Lambda^2}{\lambda} (1+\|x\|^{2 \alpha}), \quad \forall x\in \R^d.
$$
Therefore, by Theorem \ref{theo3:1}(i), $\mu$ is an invariant measure for $(T^{\mu}_t)_{t>0}$ and $(T'^{, \mu}_t)_{t>0}$ 
as well as $(T^{\mu}_t)_{t>0}$ and $(T'^{, \mu}_t)_{t>0}$ are conservative.
The rest of the assertion follows analogously to (i). 
\end{proof}

\begin{rem} \label{rem4.6}
\begin{itemize}
\item[(i)]
If $C=(c_{ij})_{1 \leq i,j \leq d}=0$, then by Remark \ref{remark:3.2}(ii), $\alpha$ of Proposition  \ref{cor:3.8}(ii) is allowed to be $1$ to obtain the same result.
\item[(ii)]
Under the assumption (ii) of Proposition \ref{cor:3.8}, if either $\rho \in L^{\infty}(\R^d)$ or $\frac{1}{\rho} \in L^{\infty}(\R^d)$, then it follows from Theorem \ref{rectrancriter}(i) and (ii) that $(T^{\mu}_t)_{t>0}$ is recurrent if $d=2$ and  transient if $d \geq 3$. Thus Proposition \ref{cor:3.8}(ii) provides a class of $L$ satisfying {\bf (S2)} with $d \geq 3$ for which $(T^{\mu}_t)_{t>0}$ is transient and conservative, but $\mu$ is the unique infinitesimally invariant measure for $(L, C_0^{\infty}(\R^d))$ and $(L'^{, \mu}, C_0^{\infty}(\R^d))$, and the unique invariant measure for $(T^{\mu}_t)_{t>0}$ and $(T'^{, \mu}_t)_{t>0}$ (cf. \cite[Theorems 3.15, 3.17]{LT21in}).
\end{itemize}
\end{rem}
We present explicit examples concerning existence and uniqueness of (infinitesimally) invariant measures using the results of this article. In particular, our results apply to a Brownian motion with singular drift, where existing literature may not be applicable (see the last paragraph of Section \ref{intro}).
\begin{exam} \label{ex:3.10} 
\begin{itemize}
\item[(i)]
Let 
$$
Lf = \frac12 \Delta f +\langle \nabla \phi, \nabla f \rangle, \quad f \in C_0^{\infty}(\R^d),
$$
where $\phi \in H^{1,p}_{loc}(\R^d)$ with $p>d$. Then {\bf (S1)} holds with $A=id$, $\rho = \exp(2\phi)$, $C=(c_{ij})_{1\leq i,j \leq d }=0$, $\overline{\mathbf{B}}=0$ and $\beta^{\rho, A+C^T} = \nabla \phi$, so that $\mu=\exp(2\phi)dx$ is an infinitesimally invariant 
(symmetrizing) measure for $(L, C_0^{\infty}(\R^d))$. If $\mu$ is finite, then $(T^{\mu}_t)_{t>0}$ is recurrent by Theorem \ref{theo:3.3}.  On the other hand if $\mu$ is infinite, then there exists no finite invariant measure for $(T^{\mu}_t)_{t>0}$ by \cite[Proposition 3.20(i)]{LT21in}.  In this case, if for some $N_0 \in \N$
\begin{equation} \label{conscrit1}
\langle \nabla \phi(x), x \rangle \leq M\|x\|^2 \left(\ln\|x\| +1 \right), \quad \text{$\mu$-a.e.}\; x \in \R^d \setminus B_{N_0}
\end{equation}
or
\begin{equation}\label{conscrit2}
\phi(x) \leq M \|x\|^2 \ln(\|x\|+1), \quad \text{$\mu$-a.e. $x \in \R^d \setminus B_{N_0}$},
\end{equation}
then $(T^{\mu}_t)_{t>0}$ is conservative by \cite[Corollary 3.27]{LT20} or \cite[Remark 3.32]{LT20}, respectively, hence it follows from \cite[Corollary 3.23, Theorem 3.52]{LT20} that for each $y \in \mathbb{R}^d$ and probability space $(\widetilde{\Omega}, \widetilde{\mathcal{F}},  \widetilde{\P})$ carrying a $d$-dimensional standard Brownian motion $(\widetilde{W}_t)_{t \geq 0}$
there exists a pathwise unique and strong solution  $(Y^y_t)_{t\geq 0}$ to
\begin{equation} \label{mainsde2} 
Y^y_t =y+\widetilde{W}_s + \int_0^t  \nabla\phi(Y^y_s)ds,  \quad 0 \leq t < \infty.
\end{equation}
Moreover, if $\mu$ is infinite and either \eqref{conscrit1} or \eqref{conscrit2} holds, then there is no finite infinitesimally invariant measure for $(L, C_0^{\infty}(\R^d))$ by \cite[Proposition 3.20(ii)]{LT21in}. \\
Now assume that for some $\alpha_1, \alpha_2 \in \R$ it holds
\begin{equation} \label{uplowcondit}
\alpha_1 \leq \phi(x) \leq \alpha_2, \quad \forall x \in \R^d.
\end{equation}
Then by Theorem \ref{uniqueinfinitesim}, Proposition  \ref{cor:3.8}(i) and Theorem \ref{mainsto}(iii),  $\mu$ is the unique infinitesimally invariant measure for $(L, C_0^{\infty}(\R^d))$ and the unique invariant measure for $(T^{\mu}_t)_{t>0}$  and the unique invariant measure for the family of strong solutions $(Y^y_t)_{t\geq0}$, $y \in \R^d$ to \eqref{mainsde2}  in the sense of Theorem \ref{mainsto}(iii), respectively. 
In particular, Theorem \ref{rectrancriter}(i) and (ii) imply that $(T^{\mu}_t)_{t>0}$ is recurrent if $d=2$ and transient if $d \geq 3$.

\item[(ii)]
Let $p\in (d, \infty)$, $\gamma:=2(1-\frac{d}{p})$ and define $\psi_0(x):=\|x\|^{\gamma}$, $x \in B_{1/4}$. Then $\psi_0 \in H^{1,p}(B_{1/4})$ with
$\nabla \psi_0(x) =\frac{\gamma}{\|x\|^{1-\gamma}} \frac{x}{\|x\|}$, $x \in B_{1/4}$. By \cite[Theorem 4.7]{EG15}, we can extend $\psi_0 \in H^{1,p}(B_{1/4})$ to $\psi \in H^{1,p}(\R^d) \cap C_0(\R^d)$ with $\psi \geq 0$ \,and\, $\text{\rm supp}(\psi) \subset B_{1/2}$. Now define
\begin{equation} \label{defnphi}
\phi(x):= 1+\sum_{k=0}^{\infty} \psi (x-k\mathbf{e}_1), \quad x \in \R^d.
\end{equation}
Then $\phi \in H^{1,p}_{loc}(\R^d) \cap C(\R^d)$ with $
1 \leq \phi(x) \leq \sup_{B_{1/2}}\psi$ for all $x \in \R^d$. Since $\nabla \phi$ has infinitely many singularities that form unbounded set in $\R^d$, so does $\langle \nabla \phi(x), x \rangle$. Now consider the situation of (i) with $\phi$ there replaced by the $\phi$ in \eqref{defnphi}. Then, \eqref{uplowcondit} holds, hence $\mu$ is the unique invariant measure for $(T^{\mu}_t)_{t>0}$ and the unique invariant measure for the family of strong solutions $(Y^y_t)_{t\geq0}$, $y \in \R^d$ to \eqref{mainsde2} in the sense of Theorem \ref{mainsto}(iii), even though \eqref{conscrit1} is not fulfilled.

\item[(iii)]
Let
$$
Lf = \frac12 \Delta f +\langle \frac12 \nabla C^T, \nabla f\rangle, \quad f \in C_0^{\infty}(\R^d),
$$
where $C=(c_{ij})_{1 \leq i,j \leq d}$ is an anti-symmetric matrix of functions that satisfies $c_{ij} \in H^{1,p}_{loc}(\R^d) \cap C(\R^d)$ with $p>d$ for all $1 \leq i,j \leq d$. Then, {\bf (S1)} holds with $A=id$, $\rho=1$, $\overline{\mathbf{B}}=0$ and $\beta^{\rho, A+C^T}=\frac12 \nabla C^T$, so that $\mu=dx$ is an infinitesimally invariant measure for $(L, C_0^{\infty}(\R^d))$. Since $\mu=dx$ is infinite, there exists no finite invariant measure for $(T^{\mu}_t)_{t>0}$ by \cite[Proposition 3.20(i)]{LT21in}. If there exist constants $M>0$, $N_0 \in \N$, such that
\begin{equation} \label{conscrit1*}
\langle \nabla C^T(x), x \rangle \leq M\|x\|^2 \left(\ln\|x\| +1 \right), \quad \text{$\mu$-a.e.}\; x \in \R^d \setminus B_{N_0}
\end{equation}
or there exists $\alpha \in [0,1)$ such that
\begin{equation}\label{conscrit2*}
\max_{1 \leq i,j \leq d}|c_{ij}(x)| \leq M \|x\|^{\alpha}  \quad \text{$\mu$-a.e. $x \in \R^d \setminus B_{N_0}$},
\end{equation}
then $(T^{\mu}_t)_{t>0}$ is conservative by \cite[Corollary 3.27]{LT20} or Theorem \ref{theo3:1}(i), respectively, hence it follows from \cite[Corollary 3.23, Theorem 3.52]{LT20} that for each $y \in \mathbb{R}^d$ and probability space $(\widetilde{\Omega}, \widetilde{\mathcal{F}},  \widetilde{\P})$ carrying a $d$-dimensional standard Brownian motion $(\widetilde{W}_t)_{t \geq 0}$
there exists a pathwise unique and strong solution  $(Y^y_t)_{t\geq 0}$ to
\begin{equation} \label{mainsde3} 
Y^y_t = y+ \widetilde{W}_t -\frac12 \int_0^t (\sum_{j=1}^d \partial_jc_{1j}, \ldots, \sum_{j=1}^d \partial_j c_{dj})(Y^y_s) ds, \quad 0 \leq t<\infty.
\end{equation}
Therefore, if either \eqref{conscrit1*} or \eqref{conscrit2*} holds, then there is no finite infinitesimally invariant measure for $(L, C_0^{\infty}(\R^d))$ by  \cite[Proposition 3.20(ii)]{LT21in}. 
In particular, if $\alpha=0$, then by Theorem \ref{uniqueinfinitesim}, Proposition \ref{cor:3.8} and Theorem \ref{mainsto}(iii),  $\mu$ is the unique infinitesimally invariant measure for $(L, C_0^{\infty}(\R^d))$ and the unique invariant measure for $(T^{\mu}_t)_{t>0}$ and the unique invariant measure for the family of strong solutions $(Y^y_t)_{t\geq0}$, $y \in \R^d$ to \eqref{mainsde2}  in the sense of Theorem \ref{mainsto}(iii).
\\
Now define $C=(c_{ij})_{1 \leq i,j \leq d}$ by 
$$
c_{1d}:=\phi, \; c_{d1}:=-\phi, \; \text{ and } c_{ij}:=0 \text{ for all } (i, j) \in \{1,\ldots, d\}^2 \setminus \{(1,d), (d,1) \},
$$
where $\phi$ is defined as in (ii). Then by Theorem \ref{uniqueinfinitesim} and Proposition  \ref{cor:3.8}(i), $\mu=dx$ is the unique infinitesimally invariant measure for $(L, C_0^{\infty}(\R^d))$ and $(L'^{, \mu}, C_0^{\infty}(\R^d))$ and the unique invariant measure for $(T^{\mu}_t)_{t>0}$ and $(T'^{, \mu}_t)_{t>0}$. In particular, by Theorem \ref{rectrancriter}(i) and (ii), $(T^{\mu}_t)_{t>0}$ is recurrent if $d=2$ and transient if $d \geq 3$. But note that $\langle \frac{1}{2} \nabla C^T(x), x\rangle$ has infinitely many singularities that form an unbounded set in $\R^d$, hence \eqref{conscrit1*} is not fulfilled, but \eqref{conscrit2*} holds.
\end{itemize}
\end{exam}
\text{}\\
{\bf Acknowledgement.} \; The author is deeply grateful to Professor Gerald Trutnau for valuable discussions and suggestions. Additionally, the author would like to express his great appreciation to the anonymous referees for their comments and suggestions.

\text{}\\

\

\text{}\\
\text{}\\
\centerline{}
Haesung Lee\\
Department of Mathematics and Computer Science, \\
Korea Science Academy of KAIST, \\
105-47 Baegyanggwanmun-ro, Busanjin-gu,\\
Busan 47162, Republic of Korea\\
E-mail: fthslt14@gmail.com
\end{document}